\documentclass[final]{siamltex}
\usepackage{amsmath,amssymb}
\usepackage{mathtools}

\usepackage{graphicx}
\usepackage{url}
\graphicspath{{./}}
\usepackage{float}
\usepackage{placeins}
\usepackage{tikz}
\usetikzlibrary{arrows.meta,positioning}
\newcommand{\dd}{\mathrm{d}}
\newcommand{\Z}{\mathbb{Z}}
\newcommand{\R}{\mathbb{R}}
\newcommand{\Prob}{\mathbb{P}}

\author{Joshua C.\ Chang\thanks{NIH Clinical Center, Rehabilitation Medicine Department (\texttt{josh.chang@nih.gov}); and Mederrata Research Inc.\ (\texttt{josh@mederrata.org}).}}
\title{Operator splitting for exploiting linear-rate closure in solving infinite ODE hierarchies}
\begin{document}
\maketitle

\begin{abstract}
    We introduce an operator-splitting method for infinite hierarchies of linear ordinary differential equations (ODEs) indexed by nonnegative integers.
    When the coupling coefficients depend linearly on the count index, an exact transformation closes the equations on finite count-index windows without an upper-boundary value.
    For more general hierarchies, Strang splitting applies the linear-rate closure during the linear-rate substeps and a conventional capped solver to the remainder.
    We derive the closure from generating functions and the method of characteristics and extend it to multi-indexed systems.
    The derivation requires neither positivity nor mass conservation, so it applies to a wider class of systems than the examplar stochastic models presented here.
    We discuss branching processes, stochastic predator-prey dynamics, the Schl\"ogl chemical kinetics model, and a telegraph model for gene expression.
    Through numerical experiments and computational cost analyses we demonstrate that our operator splitting method is typically advantageous for solving large scale systems in terms of memory usage and computational time, while retaining accuracy competitive with finite state projection (FSP) methods.
\end{abstract}

\begin{keywords}
    master equations, generating functions, operator splitting, branching processes, matrix-valued probability generating functions, finite count-index windows
\end{keywords}

\begin{AMS}
    34A30, 65L04, 65M12, 60J80, 44A60
\end{AMS}

\section{Introduction}\label{sec:intro}

Master equations for continuous-time Markov processes form infinite ODE hierarchies.
For example, let $X_t$ denote the population count in a birth-death process at time $t$.
The probabilities $p_n(t)=\Prob(X_t=n)$ satisfy the conservation rule
\begin{equation}
    \frac{\dd p_n(t)}{\dd t} = \lambda_{n-1}\, p_{n-1}(t) + \mu_{n+1}\, p_{n+1}(t) - (\lambda_n + \mu_n)\, p_n(t),
    \qquad n\geq0,
    \label{eq:chapman}
\end{equation}
where $\lambda_n$ and $\mu_n$ are the birth and death rates, $\mu_0=0$, and we set $\lambda_{-1}p_{-1}(t):=0$.
Writing $p(t)=(p_n(t))_{n\geq0}$, let $\mathcal L$ denote the coefficient operator associated with Eq.~\ref{eq:chapman}, so that $\dot p(t)=\mathcal Lp(t)$.
For a Markov hierarchy, $\mathcal L$ is the forward generator under the column-vector convention --the transpose of the backward generator acting on observables---so its off-diagonal entries are nonnegative and its columns sum to zero.
We call a finite restriction of $\mathcal L$ a \emph{capped coefficient matrix}.
When the restriction retains escape rates, it is a subgenerator whose column sums are nonpositive and are negative for states with transitions out of the retained state space.
A conventional numerical solution strategy truncates the system at a nonnegative integer cap $N$, dropping the reflux from $p_{N+1}$.
The truncated solution can be approximated quickly using matrix exponential methods, but inaccuracy accumulates as the tail mass grows.
Increasing $N$, either in advance or during integration, becomes expensive in time and memory as the population grows.
The computational cost grows faster for multitype and hidden-state models because their capped coefficient matrices are larger and often cannot be exponentiated or factored efficiently.

Truncation is unnecessary when the transition-rate coefficients depend at most linearly on the index.
For linear-rate coefficients, we transform the original open system into an equivalent nonlinear closed system that determines $p_n$ for all $n\leq N$ without $p_{N+1}$.

For more general hierarchies, we split the coefficient operator by rate structure.
Closure advances the linear-rate part, and a capped sparse solver advances the nonlinear-rate remainder.
Splitting forgoes matrix exponentiation of the linear part but reduces memory use and can be faster for large $N$.

\paragraph{Prior methods}
Many numerical methods for count-indexed hierarchies use capping.
Finite-state projection (FSP) caps the coefficient operator but retains escape rates, so the missing mass bounds the truncation error~\cite{munskyFiniteStateProjection2006}.
The capped coefficient matrix can then be propagated by a Krylov approximation to the action of the matrix exponential~\cite{saadAnalysisKrylovSubspace1992} or by dense Pad\'e scaling-and-squaring for the full matrix exponential~\cite{highamScalingSquaringMethod2009}.
Uniformization instead evaluates the propagator through a Poisson-weighted matrix-power series~\cite{MarkoffChainsAid}.
For multivariate models, tensor trains compress the Cartesian product of count states~\cite{kazeevDirectSolutionChemical2014,dolgovSimultaneousStatetimeApproximation2015,gelssSolvingMasterEquation2016}.
For scalar birth-death models, Gaver--Stehfest inversion of a Laplace-domain continued fraction is another option~\cite{crawfordTransitionProbabilitiesGeneral2012,crawfordComputationalMethodsBirthdeath2018}.
Xia and Chou developed a kinetic theory that couples demographic stochasticity to continuous cell age and size~\cite{xiaKineticTheoryStructured2021} and later extended the framework to internal state and cell generation~\cite{xiaKineticTheoriesState2024}; these PDE and partial integrodifferential equation formulations are complementary to the count-indexed ODE closure studied here.
Veerman et al.~\cite{veermanTimedependentPropagatorsStochastic2018} also evolve the generating function along characteristics, but they recover coefficients by a contour fast Fourier transform (FFT) rather than the closed lower-triangular ODE whose dependency pattern is illustrated in Figure~\ref{fig:dependency_structure}(b).
Existing operator-splitting methods commonly separate a Markov generator by timescale, reaction, or species~\cite{macnamaraMultiscaleModelingChemical2008,choiStochasticOperatorsplittingMethod2012}.
We instead split by rate structure and solve the linear-rate subproblem by closure.

\section{Methods}\label{sec:methods}

Section~\ref{sec:closure} derives the finite closure.
Section~\ref{sec:splitting} embeds the closure in operator splitting.
Section~\ref{sec:applications} applies the method to branching processes, the Schl\"ogl chemical kinetics model, predator-prey kinetics, and a telegraph model for gene expression.

\subsection{Closure}\label{sec:closure}

\begin{definition}[Linear-rate ODE hierarchy]\label{def:source_affine}
    Consider a sequence $p(t)=(p_n(t))_{n\geq0}$ satisfying the initial-value problem $\dot p_n(t)=\sum_{m\geq0}\mathcal{L}_{n,m}p_m(t)$ with $p_n(0)=p_n^0$ for $n\geq0$.
    The coefficient operator $\mathcal{L}$ is \emph{linear-rate} if there exist finitely supported real sequences $\{\alpha_r\}_{r \geq -1}$ and $\{\beta_r\}_{r \geq 0}$ (with $\beta_{-1} := 0$) such that
    \begin{equation}
        \mathcal{L}_{n + r,\, n} = \alpha_r\, n + \beta_r,
        \qquad n\geq0,\quad n+r\geq0.
        \label{eq:source_affine}
    \end{equation}
\end{definition}
In the entry $\mathcal{L}_{n+r,n}$, $n$ is the source (column) index, $n+r$ is the destination (row) index, and the shift $r$ is their signed offset, $r=(n+r)-n$.
For each shift $r$, the constants $\alpha_r$ and $\beta_r$ are independent of the source index $n$; Eq.~\ref{eq:source_affine} displays all dependence on $n$.
As a concrete example, in the birth-death process of Eq.~\ref{eq:chapman}, let $\lambda_n=\lambda n+\nu$ and $\mu_n=\mu n$, where $\lambda,\mu\geq0$ are the per-particle birth and death rates and $\nu\geq0$ is the immigration rate.
Thus $\alpha_1=\lambda$, $\beta_1=\nu$, $\alpha_{-1}=\mu$, $\beta_{-1}=0$, $\alpha_0=-(\lambda+\mu)$, and $\beta_0=-\nu$.
Note that definition~\ref{def:source_affine} is agnostic to the sign of coefficients.

For a linear-rate ODE hierarchy, define the generating function and its initial value by
\begin{equation}
    G(z,t):=\sum_{n=0}^{\infty}p_n(t)z^n,
    \qquad
    G_0(z):=G(z,0)=\sum_{n=0}^{\infty}p_n^0z^n.
    \label{eq:scalar_generating_function}
\end{equation}
We interpret the series as analytic functions on their domains of convergence and use formal power series when extracting only finitely many coefficients.
The generating function satisfies the partial differential equation (PDE)
\begin{equation}
    \partial_t G(z, t) = A(z)\, \partial_z G(z, t) + B(z)\, G(z, t),
    \label{eq:first_order_pde}
\end{equation}
where $A(z) = \sum_{r\geq-1} \alpha_r\, z^{r + 1}$ and $B(z) = \sum_{r\geq0} \beta_r\, z^r$.

Eq.~\ref{eq:first_order_pde} admits a solution by the method of characteristics.
Let $\Phi_t(z)$ solve the characteristic ODE
\begin{equation}
    \frac{\dd \Phi_t(z)}{\dd t} = A(\Phi_t(z)), \qquad \Phi_0(z) = z,
    \label{eq:characteristic}
\end{equation}
and let the multiplier $K_t(z)$ solve
\begin{equation}
    \frac{\dd K_t(z)}{\dd t} = B(\Phi_t(z))\, K_t(z), \qquad K_0(z) = 1.
    \label{eq:multiplier}
\end{equation}
Power-series expansion of Eqs.~\ref{eq:characteristic} and~\ref{eq:multiplier} gives a lower-triangular coefficient recursion; Figure~\ref{fig:dependency_structure}(b) illustrates this dependency pattern for the pure birth-death example.

\begin{theorem}[Composition-multiplier representation]\label{thm:cm}
    Let an ODE hierarchy be linear-rate, with $\Phi_t$ and $K_t$ defined by Eqs.~\ref{eq:characteristic} and~\ref{eq:multiplier}, and generating function defined as in Eq.~\ref{eq:scalar_generating_function}.
    Let $\Omega\subseteq\mathbb{C}$ be a domain on which $G_0$ is defined, and let $I\subseteq[0,\infty)$ be a time interval containing zero such that $\Phi_t$ and $K_t$ are defined on $\Omega$ and $\Phi_t(\Omega)\subseteq\Omega$ for every $t\in I$.
    Then the unique classical solution of Eq.~\ref{eq:first_order_pde} on $\Omega\times I$ with initial value $G(z,0)=G_0(z)$ is
    \begin{equation}
        G(z, t) = K_t(z)\, G_0\bigl(\Phi_t(z)\bigr).
        \label{eq:composition_multiplier}
    \end{equation}
\end{theorem}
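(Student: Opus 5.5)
We prove existence by direct verification and uniqueness by transporting an arbitrary solution back along characteristics. Throughout we use the flow property $\Phi_{t+s}=\Phi_t\circ\Phi_s$ of the autonomous characteristic ODE, Eq.~\ref{eq:characteristic}, together with the linearized identity
\[
\partial_t\Phi_t(z)=A\bigl(\Phi_t(z)\bigr)=A(z)\,\partial_z\Phi_t(z).
\]
The second equality follows by differentiating $\Phi_s(\Phi_t(z))=\Phi_t(\Phi_s(z))$ in $s$ at $s=0$. Analyticity of $\Phi_t$ and $K_t$ in $z$ follows from analytic dependence of ODE solutions on initial data, since $A$ and $B$ are polynomials.

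The multiplier needs the analogous identity
\[
\partial_t K_t(z)=A(z)\,\partial_z K_t(z)+B(z)\,K_t(z).
\]
To get it, we write $K_t(z)=\exp\int_0^t B(\Phi_s(z))\,\dd s$ and use the cocycle relation $K_{t+s}(z)=K_s(z)\,K_t(\Phi_s(z))$. This relation holds because both sides solve Eq.~\ref{eq:multiplier} in $t$ with the same value at $t=0$. Differentiating it in $s$ at $s=0$ gives $\partial_tK_t(z)=B(z)K_t(z)+A(z)\,\partial_zK_t(z)$.

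With both identities in hand, we set $G(z,t)=K_t(z)G_0(\Phi_t(z))$ and apply the product and chain rules:
\[
\partial_t G=(A\,\partial_zK_t+BK_t)\,G_0(\Phi_t)+K_t\,G_0'(\Phi_t)\,A\,\partial_z\Phi_t .
\]
This equals $A\,\partial_zG+BG$, and $G(z,0)=G_0(z)$ because $\Phi_0=\mathrm{id}$ and $K_0=1$. The hypothesis $\Phi_t(\Omega)\subseteq\Omega$ is exactly what makes $G_0(\Phi_t(z))$ well defined for $t\in I$.

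For uniqueness, let $H$ be any classical solution on $\Omega\times I$ with $H(\cdot,0)=G_0$. We fix $z\in\Omega$ and $t\in I$, and set
\[
g(s):=K_s(z)\,H\bigl(\Phi_s(z),\,t-s\bigr),\qquad s\in[0,t].
\]
Here $\Phi_s(z)\in\Omega$ by invariance. Then
\[
g'(s)=K_s\bigl[B(\Phi_s)H+A(\Phi_s)\,\partial_zH-\partial_tH\bigr]\big|_{(\Phi_s(z),\,t-s)}=0
\]
by the PDE, Eq.~\ref{eq:first_order_pde}. Hence $H(z,t)=g(0)=g(t)=K_t(z)H(\Phi_t(z),0)=K_t(z)G_0(\Phi_t(z))$.

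This argument does not even need the flow identities, so uniqueness reduces to a one-line chain rule. The main obstacle is the bookkeeping of domains rather than any hard estimate. We need $\Phi_s$ and $K_s$ to exist for all intermediate $s\le t$. This holds because $I$ is an interval containing $0$ and the invariance hypothesis holds for every $s\in I$. We also need the composite $g$ to be differentiable; for this we assume, as "classical" entails, that $H$ is $C^1$ jointly in $(z,t)$ with holomorphic dependence on $z$.

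A cleaner fallback for existence avoids the $z$-derivative identities altogether. Define $G$ by the formula and verify the PDE along characteristics as in the uniqueness step. To do so, we note that $\tilde G(s):=G(\Phi_s(z),t-s)\,K_s(z)$ is constant by the cocycle relations, and then differentiate. I would present the direct verification first and keep this as a check.
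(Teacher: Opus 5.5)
Your proposal is correct. Its uniqueness half is the paper's proof with a different parametrization.

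The paper follows the backward characteristic $\chi(s)=\Phi_{t-s}(z)$ and observes that any classical solution satisfies the scalar linear ODE $\frac{\dd}{\dd s}G(\chi(s),s)=B(\chi(s))\,G(\chi(s),s)$. It then integrates this ODE and identifies $\exp\int_0^tB(\Phi_{t-s}(z))\,\dd s$ with $K_t(z)$ via $u=t-s$. You instead run the time argument as $t-s$ and carry $K_s(z)$ along as an integrating factor. Constancy of $g$ then replaces both the explicit exponential and the change of variables. This formulation also carries over unchanged to the right-ordered matrix multiplier of Corollary~\ref{cor:matrix_recursion}, where no scalar exponential is available.

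The real difference is existence. The paper's argument shows only that every classical solution must equal $K_t(z)G_0(\Phi_t(z))$, together with the check at $t=0$. It never verifies that this function satisfies Eq.~\ref{eq:first_order_pde}, so the existence half of ``the unique classical solution is'' is left implicit. Your direct substitution fills that gap. It rests on the identities $A(\Phi_t(z))=A(z)\,\partial_z\Phi_t(z)$ and $\partial_tK_t=A\,\partial_zK_t+BK_t$. You obtain these by differentiating the flow and cocycle relations at $s=0$, the same device the paper uses later for the matrix multiplier.

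One small correction: the two sides of $K_{t+s}(z)=K_s(z)K_t(\Phi_s(z))$ do not literally solve Eq.~\ref{eq:multiplier}. By the flow property, both solve $\dot Y=B(\Phi_{t+s}(z))\,Y$ in $t$ with $Y(0)=K_s(z)$.
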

Setting $t=0$ in Eq.~\ref{eq:composition_multiplier} and using $\Phi_0(z)=z$ gives $G(z,0)=K_0(z)G_0(z)$.
Requiring this identity to recover the prescribed initial value $G(z,0)=G_0(z)$ for every $G_0$ fixes the normalization $K_0(z)=1$.
The proof follows by the method of characteristics~\cite{PartialDifferentialEquations}.
\begin{proof}
    Fix $t\in I$ and define the backward characteristic
    \begin{equation}
        \chi(s):=\Phi_{t-s}(z),\qquad 0\leq s\leq t.
        \label{eq:backward_characteristic}
    \end{equation}
    Eq.~\ref{eq:characteristic} gives $\dot\chi(s)=-A(\chi(s))$, $\chi(0)=\Phi_t(z)$, and $\chi(t)=z$.
    Applying the chain rule and Eq.~\ref{eq:first_order_pde} along Eq.~\ref{eq:backward_characteristic} gives
    \begin{equation}
        \frac{\dd}{\dd s}G(\chi(s),s)
        =\partial_tG(\chi(s),s)-A(\chi(s))\partial_zG(\chi(s),s)
        =B(\chi(s))G(\chi(s),s).
        \label{eq:solution_along_characteristic}
    \end{equation}
    Integrating Eq.~\ref{eq:solution_along_characteristic} from $0$ to $t$, changing variables $u=t-s$, and using $K_0(z)=1$ in Eq.~\ref{eq:multiplier} gives Eq.~\ref{eq:composition_multiplier}.
    Every solution of Eq.~\ref{eq:first_order_pde} satisfies the scalar linear ODE in Eq.~\ref{eq:solution_along_characteristic} along each characteristic in Eq.~\ref{eq:backward_characteristic}, so uniqueness for that ODE proves uniqueness of the representation.
\end{proof}

Relating to probability hierarchies, Eq.~\ref{eq:composition_multiplier} applies to the Kendall-Harris formula for branching with immigration~\cite{kendallGeneralizedBirthandDeathProcess1948,harrisBranchingProcesses1948}.
Eq.~\ref{eq:composition_multiplier} also applies to the monomolecular chemical master equation solution of Jahnke and Huisinga~\cite{jahnkeSolvingChemicalMaster2007}.

Finite-dimensional closure immediately follows by extracting power-series coefficients with the operator $[z^n]$ defined by
\begin{equation}
    [z^n]\sum_{k=0}^{\infty}h_kz^k:=h_n,\qquad n\geq0.
    \label{eq:coefficient_extraction}
\end{equation}

\begin{corollary}[Closed coefficient recursion]\label{cor:algebraic_recursion}
    Expand $\Phi_t(z) = \sum_{k=0}^{\infty} \phi_k(t)\, z^k$ and $K_t(z) = \sum_{k=0}^{\infty} \kappa_k(t)\, z^k$ as formal power series.
    Their coefficients satisfy
    \begin{equation}
        \begin{aligned}
            \dot \phi_n(t) & =[z^n]A(\Phi_t(z)),
                           & \dot \kappa_n(t)    & =[z^n]\bigl(B(\Phi_t(z))K_t(z)\bigr), \\
            \phi_n(0)      & =\delta_{n,1},
                           & \kappa_n(0)         & =\delta_{n,0}
        \end{aligned}
        \qquad n\geq0.
        \label{eq:coefficient_ode}
    \end{equation}
    Here $\delta_{n,m}$ is the Kronecker delta.
    At index $n$, the right-hand sides depend only on $\phi_0,\ldots,\phi_n$ and $\kappa_0,\ldots,\kappa_n$.
    Thus the equations through index $N$ form a closed ODE system and require no boundary value at $N+1$.
\end{corollary}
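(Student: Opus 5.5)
The plan is to read Eqs.~\ref{eq:characteristic} and~\ref{eq:multiplier} coefficientwise in the ring of formal power series in $z$ whose coefficients are functions of $t$. The structural fact doing the work is that $A$ and $B$ are \emph{polynomials}: the sequences $\{\alpha_r\}$ and $\{\beta_r\}$ in Definition~\ref{def:source_affine} are finitely supported. Hence $A(\Phi_t(z))=\sum_{r\ge -1}\alpha_r\Phi_t(z)^{r+1}$ and $B(\Phi_t(z))K_t(z)$ are finite sums of products of formal series. They are well defined even though the constant term $\phi_0(t)$ need not vanish. For general analytic $A$, composition with a series having nonzero constant term would not be formally defined, so this is where the linear-rate hypothesis enters.

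\textbf{Step 1: the coefficient ODEs.} Apply $[z^n]$ from Eq.~\ref{eq:coefficient_extraction} to both sides of Eqs.~\ref{eq:characteristic} and~\ref{eq:multiplier}. I will use $[z^n]\,\partial_t F=\frac{\dd}{\dd t}[z^n]F$, which holds by definition of the time derivative of a formal series. In the analytic setting of Theorem~\ref{thm:cm}, where $\Phi_t$ and $K_t$ are analytic near $z=0$ and $C^1$ in $t$, it follows from the Cauchy integral formula $[z^n]F=\frac{1}{2\pi i}\oint F(z)z^{-n-1}\,\dd z$ by differentiating under the integral on a small fixed circle. The initial conditions follow from $\Phi_0(z)=z$ and $K_0(z)=1$, which give $\phi_n(0)=\delta_{n,1}$ and $\kappa_n(0)=\delta_{n,0}$.

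\textbf{Step 2: lower-triangular dependence.} By the Cauchy product formula, for formal series $F=\sum f_kz^k$ and $H=\sum h_kz^k$ we have $[z^n](FH)=\sum_{k=0}^n f_kh_{n-k}$, which involves only $f_0,\dots,f_n$ and $h_0,\dots,h_n$. Induction on the power $j$ then shows that $[z^n]\Phi_t^j$ is a polynomial in $\phi_0,\dots,\phi_n$ alone. Summing over the finitely many $j=r+1$ with $\alpha_r\ne0$ shows that $[z^n]A(\Phi_t)$ depends only on $\phi_0,\dots,\phi_n$. The same argument applied to $B(\Phi_t)K_t$ shows that its $n$th coefficient depends only on $\phi_0,\dots,\phi_n$ and $\kappa_0,\dots,\kappa_n$. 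Therefore the equations with indices $0,\dots,N$ form a closed polynomial ODE system in $2(N+1)$ unknowns, and nothing indexed by $N+1$ appears. It is worth recording the explicit triangular form:
\[
\dot\phi_0=A(\phi_0),\qquad \dot\phi_n=A'(\phi_0)\phi_n+R_n(\phi_0,\dots,\phi_{n-1}),\qquad \dot\kappa_n=B(\phi_0)\kappa_n+S_n(\phi_0,\dots,\phi_n,\kappa_0,\dots,\kappa_{n-1}),
\]
with $n\ge1$ in the $\phi$ equation and $n\ge0$ in the $\kappa$ equation. This comes from isolating the single term of each Cauchy product that contains the top index. It shows that, once $\phi_0$ is known, each new equation is a scalar linear ODE.

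\textbf{Main obstacle: well-posedness.} The step needing care is that the closed finite system actually determines the coefficients, and that its solution agrees with the Taylor coefficients of the analytic $\Phi_t$ and $K_t$ from Theorem~\ref{thm:cm}. I would argue by uniqueness. The truncated right-hand side is polynomial, hence locally Lipschitz, so Picard--Lindel\"of gives a unique local solution. On any interval where $\phi_0$ exists, the triangular form above is a cascade of linear ODEs with continuous coefficients, so it extends over that whole interval. The Taylor coefficients of the analytic solutions satisfy the same system by Step~1, so they coincide with this unique solution. The only possible breakdown is finite-time blow-up of $\phi_0$, which solves $\dot\phi_0=A(\phi_0)$ with $\phi_0(0)=0$. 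I would handle this by restricting to the interval $I$ of Theorem~\ref{thm:cm}, on which $\Phi_t$ is assumed defined.
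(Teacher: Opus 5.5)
Your proposal is correct and follows the paper's proof. Like the paper, you apply $[z^n]$ coefficientwise to Eqs.~\ref{eq:characteristic} and~\ref{eq:multiplier}, read the initial data off $\Phi_0(z)=z$ and $K_0(z)=1$, and use finite Cauchy products (finite because $\{\alpha_r\}$ and $\{\beta_r\}$ have finite support) to show that index $n$ involves only indices $\le n$. Your closing well-posedness paragraph goes beyond what this corollary asserts and corresponds to the paper's Proposition~\ref{prop:in_window}; there your explicit linear cascade ($\dot\phi_n=A'(\phi_0)\phi_n+\cdots$, $\dot\kappa_n=B(\phi_0)\kappa_n+\cdots$) is a clean alternative to the paper's Picard--Lindel\"of induction, and it shows the finite system exists exactly as long as $\phi_0$ does.
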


\begin{proof}
    Coefficientwise differentiation and application of the operator in Eq.~\ref{eq:coefficient_extraction} to Eqs.~\ref{eq:characteristic} and~\ref{eq:multiplier} give the differential equations in Eq.~\ref{eq:coefficient_ode}.
    The initial values follow from $\Phi_0(z)=z$ and $K_0(z)=1$ in Eqs.~\ref{eq:characteristic} and~\ref{eq:multiplier}.
    Since $A(w)=\sum_{r\geq-1}\alpha_rw^{r+1}$, the finite Cauchy products give~\cite{flajoletAnalyticCombinatorics2009,pivoteauAlgorithmsCombinatorialSystems2012}
    \begin{equation}
        [z^n]A(\Phi_t(z))
        =\alpha_{-1}\delta_{n,0}+\sum_{r\geq0}\alpha_r
        \sum_{\substack{k_1+\cdots+k_{r+1}=n \\k_1,\ldots,k_{r+1}\geq0}}
        \prod_{i=1}^{r+1}\phi_{k_i}(t).
        \label{eq:A_coefficient_expansion}
    \end{equation}
    Likewise, $B(w)=\sum_{r\geq0}\beta_rw^r$ and the product with $K_t$ give
    \begin{equation}
        [z^n]\bigl(B(\Phi_t(z))K_t(z)\bigr)
        =\beta_0\kappa_n(t)+\sum_{r\geq1}\beta_r
        \sum_{\substack{j+k_1+\cdots+k_r=n \\j,k_1,\ldots,k_r\geq0}}
        \kappa_j(t)\prod_{i=1}^{r}\phi_{k_i}(t).
        \label{eq:BK_coefficient_expansion}
    \end{equation}
    Every index in the sums of Eqs.~\ref{eq:A_coefficient_expansion} and~\ref{eq:BK_coefficient_expansion} is at most $n$.
    Thus the equations for index $n$ use only $\phi_0,\ldots,\phi_n$ and $\kappa_0,\ldots,\kappa_n$, and finite support of $\{\alpha_r\}$ and $\{\beta_r\}$ makes both outer sums finite.
\end{proof}

\paragraph{Birth-death example}
For pure birth-death with per-particle birth and death rates $\lambda,\mu\geq0$, we have $\alpha_1=\lambda$, $\alpha_{-1}=\mu$, $\alpha_0=-(\lambda+\mu)$, and $\beta_r=0$ for every $r$.
Since $\beta_r=0$ for every $r$, $B(z)\equiv0$.
Eq.~\ref{eq:multiplier} and $K_0(z)=1$ therefore give $K_t(z)\equiv1$.
With one initial particle, $G_0(z)=z$, so Eq.~\ref{eq:composition_multiplier} reduces to $G(z,t)=\Phi_t(z)$.
We therefore write $\Phi_t(z)=\sum_{n=0}^{\infty}p_n(t)z^n$.
Eq.~\ref{eq:coefficient_ode} becomes
\begin{equation}
    \dot p_n(t) = -(\lambda + \mu)\, p_n(t) + \mu\, \delta_{n,0} + \lambda \sum_{k = 0}^n p_k(t)\, p_{n - k}(t),
    \qquad n\geq0.
    \label{eq:bd_riccati}
\end{equation}
Figure~\ref{fig:dependency_structure} compares the dependency graphs in Eqs.~\ref{eq:chapman} and~\ref{eq:bd_riccati}.
\begin{figure}[!ht]
    \centering
    \includegraphics[width=0.96\textwidth]{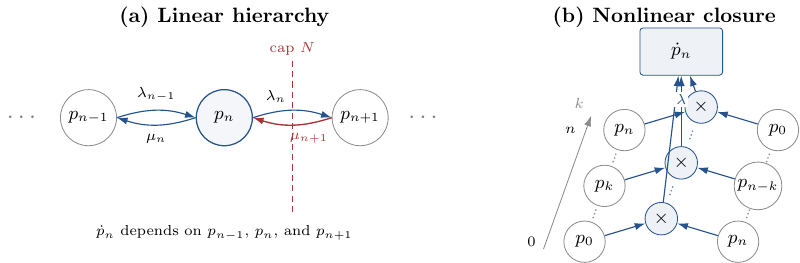}
    \caption{Dependency graphs for the original linear hierarchy and nonlinear closure.
        \textbf{(a)} Eq.~\ref{eq:chapman} gives $\dot p_n$ from $p_{n-1}$, $p_n$, and $p_{n+1}$; when $n=N$, the red arrow marks return flux from $p_{N+1}$ across the cap.
        \textbf{(b)} The closure removes the dependence on $p_{n+1}$ and introduces a quadratic convolution.
        At depth $k$, the nodes $p_k$ and $p_{n-k}$ feed a multiplication node whose output contributes $\lambda p_kp_{n-k}$ to $\dot p_n$.
        Since $0\leq k\leq n$, no coefficient with index greater than $n$ enters the nonlinear equation.
        Panel \textbf{(b)} omits the unchanged linear term $-(\lambda+\mu)p_n$.}
    \label{fig:dependency_structure}
\end{figure}

In the pure birth-death case, Eq.~\ref{eq:characteristic} is the scalar Riccati equation $\dot\Phi=\lambda\Phi^2-(\lambda+\mu)\Phi+\mu$.
Solving the Riccati equation and evaluating at $z=0$, where $p_0(t)=\Phi_t(0)$, gives $p_0(t)=(\mu-\mu e^{-(\mu-\lambda)t})/(\mu-\lambda e^{-(\mu-\lambda)t})$ for $\lambda\neq\mu$ and $p_0(t)=\mu t/(1+\mu t)$ in the critical limit.
For numerical stability when $\mu>0$, use the transformation $\rho(t):=(\lambda/\mu)p_0(t)$ and $p_1(t):=(1-p_0(t))(1-\rho(t))$.
The transformation gives the geometric tail
\begin{equation}
    p_n(t) = p_1(t)\, \rho(t)^{n - 1}, \qquad n \geq 1.
    \label{eq:bd_geometric_tail}
\end{equation}
The geometric-tail representation in Eq.~\ref{eq:bd_geometric_tail} is less susceptible to floating-point cancellation than direct coefficient extraction from the linear-fractional generating function.

A direct solver of Eq.~\ref{eq:chapman} on $0\leq n\leq N$ must close the upper boundary, commonly by setting $p_{N+1}=0$.
The zero-boundary approximation removes the return-flux term $\mu_{N+1}p_{N+1}$ from the equation for $p_N$.

Assume Definition~\ref{def:source_affine}, and let $r_{\max}\geq0$ be an upper bound on the supports of both coefficient sequences.
Let $[0,T_\star)$ be the maximal time interval on which coefficientwise smooth formal power series $\Phi_t,K_t\in\R[[z]]$ solve Eqs.~\ref{eq:characteristic} and~\ref{eq:multiplier}.
We call a finite set of retained count indices an \emph{index window}; time domains such as $[0,T_\star)$ are instead called intervals.
For the index window $\{0,\ldots,N\}$, define the truncated polynomials
\begin{equation}
    \Phi_t^{(N)}(z):=\sum_{n=0}^N\phi_n^{(N)}(t)z^n,
    \qquad
    K_t^{(N)}(z):=\sum_{n=0}^N\kappa_n^{(N)}(t)z^n.
    \label{eq:truncated_characteristic_series}
\end{equation}
Using the polynomials in Eq.~\ref{eq:truncated_characteristic_series}, define the finite initial-value problem
\begin{equation}
    \begin{aligned}
        \dot\phi_n^{(N)}(t)
         & =[z^n]A\bigl(\Phi_t^{(N)}(z)\bigr),
         & \phi_n^{(N)}(0)                                   & =\delta_{n,1}, \\
        \dot\kappa_n^{(N)}(t)
         & =[z^n]\bigl(B(\Phi_t^{(N)}(z))K_t^{(N)}(z)\bigr),
         & \kappa_n^{(N)}(0)                                 & =\delta_{n,0},
    \end{aligned}
    \qquad 0\leq n\leq N.
    \label{eq:coefficient_ode_truncated}
\end{equation}
We evaluate each right-hand side from the finite Cauchy products involving coefficients with indices at most $n$.

\begin{proposition}[Exactness on a finite index window]\label{prop:in_window}
    The finite system in Eq.~\ref{eq:coefficient_ode_truncated} has a unique maximal classical solution on an interval $[0,T_N)$ with $T_N\geq T_\star$.
    On $[0,T_\star)$, that solution satisfies
    \begin{equation}
        \phi_n^{(N)}(t) = [z^n]\, \Phi_t(z),
        \qquad \kappa_n^{(N)}(t) = [z^n]\, K_t(z),
        \qquad 0 \leq n \leq N,
        \label{eq:in_window_exact}
    \end{equation}
    and Eq.~\ref{eq:coefficient_ode_truncated} contains no coefficient with index greater than $N$.
\end{proposition}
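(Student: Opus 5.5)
The plan is to compare the finite system in Eq.~\ref{eq:coefficient_ode_truncated} with the first $N+1$ equations of the infinite system in Eq.~\ref{eq:coefficient_ode}. The key fact, already established in the proof of Corollary~\ref{cor:algebraic_recursion}, is that $[z^n]$ of a polynomial in a power series only sees coefficients of index at most $n$.

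\textbf{Step 1 (index bound).} For $n\leq N$ I will show $[z^n]A(\Phi^{(N)}_t(z))$ and $[z^n]\bigl(B(\Phi^{(N)}_t(z))K^{(N)}_t(z)\bigr)$ are given by the finite Cauchy products of Eqs.~\ref{eq:A_coefficient_expansion} and~\ref{eq:BK_coefficient_expansion} with $\phi_k,\kappa_j$ replaced by $\phi^{(N)}_k,\kappa^{(N)}_j$. In those formulas every index satisfies $k_i\leq n$ and $j\leq n$, because all indices are nonnegative and sum to $n$. Truncating at degree $N\geq n$ therefore discards only monomials that cannot contribute to $[z^n]$. Hence the right-hand sides involve no coefficient with index greater than $N$; this is the last assertion of the proposition. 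Moreover, the $n\leq N$ truncated equations are literally the same polynomial vector field $F:\R^{2(N+1)}\to\R^{2(N+1)}$ as the first $N+1$ pairs of equations in Eq.~\ref{eq:coefficient_ode}.

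\textbf{Step 2 (existence and uniqueness).} Since $\{\alpha_r\}$ and $\{\beta_r\}$ are finitely supported, $F$ is a polynomial map, hence locally Lipschitz. Picard--Lindel\"of then gives a unique maximal classical solution on some $[0,T_N)$. As an aside, I would note that $F$ is lower-triangular in structure. The equation for $\kappa_n$ is linear in $\kappa_n$. The equation for $\phi_n$ with $n\geq1$ is linear in $\phi_n$ with coefficient $A'(\phi_0)$. Only $\phi_0$ solves a genuinely nonlinear scalar ODE, $\dot\phi_0=A(\phi_0)$, so blow-up can only originate there. I do not need this observation for the proof.

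\textbf{Step 3 (agreement and $T_N\geq T_\star$).} On $[0,T_\star)$, the coefficients $(\phi_n(t),\kappa_n(t))_{n\leq N}$ of the formal solutions $\Phi_t,K_t$ are smooth. By Corollary~\ref{cor:algebraic_recursion} and Step 1 they satisfy $\dot y=F(y)$ with the same initial data $\phi_n(0)=\delta_{n,1}$, $\kappa_n(0)=\delta_{n,0}$. Thus they form a classical solution of Eq.~\ref{eq:coefficient_ode_truncated} on $[0,T_\star)$. By uniqueness of the maximal solution, this solution is a restriction of the maximal one, so $T_N\geq T_\star$ and Eq.~\ref{eq:in_window_exact} holds on $[0,T_\star)$.

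\textbf{Main obstacle.} The only delicate point is the justification in Step 1 that coefficient extraction commutes with truncation. I will verify it by writing $\Phi_t=\Phi^{(N)}_t+z^{N+1}R_t$ and $K_t=K^{(N)}_t+z^{N+1}S_t$ in $\R[[z]]$, using that $A$ and $B$ are polynomials. Then $A(\Phi_t)-A(\Phi^{(N)}_t)$ and $B(\Phi_t)K_t-B(\Phi^{(N)}_t)K^{(N)}_t$ both lie in the ideal $z^{N+1}\R[[z]]$, so their $[z^n]$ coefficients vanish for $n\leq N$. The $\alpha_{-1}\delta_{n,0}$ constant term in $A$ is unaffected by the truncation.
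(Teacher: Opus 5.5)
Your proposal is correct and follows the same route as the paper: the index-$n$ right-hand side depends only on indices at most $n$, Picard--Lindel\"of gives uniqueness, and the formal coefficients are identified with the truncated solution on $[0,T_\star)$, which forces $T_N\geq T_\star$. The differences are minor. You invoke uniqueness once for the whole closed system on $\R^{2(N+1)}$ instead of inducting index by index as the paper does, which is slightly more direct. Your argument that $A(\Phi_t)-A(\Phi_t^{(N)})$ and $B(\Phi_t)K_t-B(\Phi_t^{(N)})K_t^{(N)}$ lie in $z^{N+1}\R[[z]]$ makes explicit the truncation step that the paper leaves implicit in its Cauchy-product formulas.
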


We prove the proposition by establishing the general triangular dependency structure illustrated for birth-death in Figure~\ref{fig:dependency_structure}(b), identifying the coefficients at each index by induction, and discarding coefficients at higher indices.
\begin{proof}
    \emph{Step 1: triangular structure.}
    Eqs.~\ref{eq:A_coefficient_expansion} and~\ref{eq:BK_coefficient_expansion} show that the right-hand side for index $n$ contains no coefficient with index greater than $n$.
    Thus $(\dot\phi_n^{(N)},\dot\kappa_n^{(N)})=R_n(\phi_0^{(N)},\ldots,\phi_n^{(N)},\kappa_0^{(N)},\ldots,\kappa_n^{(N)})$ for a polynomial map $R_n:\R^{2(n+1)}\to\R^2$ of degree at most $r_{\max}+1$.

    \emph{Step 2: induction on the index.}
    We prove equality with the formal-series solution by induction on $n$.
    At $n=0$, the polynomial map $R_0$ is locally Lipschitz continuous.
    The Picard-Lindel\"of theorem therefore gives a unique local solution from $\phi_0^{(N)}(0)=0$ and $\kappa_0^{(N)}(0)=1$.
    The pair $([z^0]\Phi_t(z),[z^0]K_t(z))$ satisfies the same initial-value problem, so the two solutions agree wherever both are defined.
    For the inductive step, suppose the assertion holds for indices $0, \ldots, n-1$.
    We substitute the already identified coefficients with indices $0, \ldots, n-1$ into the component for index $n$, leaving a polynomial ODE in $(\phi_n^{(N)},\kappa_n^{(N)})\in\R^2$ whose coefficients depend on time through the lower-index solutions.
    Its right-hand side is continuous in time and locally Lipschitz continuous in $(\phi_n^{(N)},\kappa_n^{(N)})$.
    Thus the hypotheses of the Picard-Lindel\"of theorem are satisfied, giving a unique local solution from $(\delta_{n,1},\delta_{n,0})$.
    The pair $([z^n]\Phi_t(z),[z^n]K_t(z))$ satisfies that ODE with the same initial values, so uniqueness completes the induction.

    \emph{Step 3: independence from higher indices.}
    Step~1 shows that every $R_n$ with $n\leq N$ ignores all coefficients with indices greater than $N$.
    The truncated system therefore closes on $\R^{2(N+1)}$ without introducing any remaining formal coefficients.
    Step~2 and the extension theorem for finite-dimensional polynomial ODEs give the maximal solution on $[0,T_N)$ with $T_N\geq T_\star$.
\end{proof}

\paragraph{Reconstruction of $\{p_n(t)\}$ and computational cost}
We reconstruct the probability distribution via
\[
    p_n(t)=\sum_{m\geq0}p_m^0[z^n]\bigl(K_t(z)\Phi_t(z)^m\bigr)
\]
using iterated Cauchy products.
For initial data with infinite support, let $M_0\geq0$ be the largest retained initial index.
Restricting the reconstruction sum to $0\leq m\leq M_0$ introduces an initial-tail error but no boundary condition at $N+1$.
With $S$ coefficient-ODE evaluations, direct Cauchy products require $\Theta(SN^2)$ operations for the ODE and $\Theta(N^2M_0)$ operations for reconstruction.
The computational cost of reconstruction falls to $\Theta(N^2)$ for a single initial component.

\subsubsection{Multivariate closure}\label{sec:scalar_multitype}

Let the positive integer $K$ denote the number of count axes, and let $\boldsymbol{n}=(n_1,\ldots,n_K)\in\Z_{\geq0}^K$ and $\boldsymbol{z}=(z_1,\ldots,z_K)$.
Define the multivariate generating function, its initial value, and its monomials by
\begin{equation}
    F(\boldsymbol{z}, t)
    :=\sum_{\boldsymbol{n} \in \Z_{\geq 0}^K} p_{\boldsymbol{n}}(t)\,\boldsymbol{z}^{\boldsymbol{n}},
    \qquad
    F_0(\boldsymbol{z}):=F(\boldsymbol{z},0),
    \qquad
    \boldsymbol{z}^{\boldsymbol{n}}:=\prod_{i=1}^K z_i^{n_i}.
    \label{eq:multitype_generating_function}
\end{equation}
For a source multi-index $\boldsymbol{n}$, a shift $\boldsymbol{r}\in\Z^K$ is the componentwise signed offset to the destination multi-index $\boldsymbol{n}+\boldsymbol{r}$, so $\boldsymbol{r}=(\boldsymbol{n}+\boldsymbol{r})-\boldsymbol{n}$.
Assume that $\mathcal{R}\subseteq\Z^K$ is a finite set of such shifts with real coefficients $\alpha^{(i)}_{\boldsymbol{r}}$ and $\beta_{\boldsymbol{r}}$ such that
\begin{equation}
    \mathcal{L}_{\boldsymbol{n}+\boldsymbol{r},\boldsymbol{n}}
    =\sum_{i=1}^K\alpha^{(i)}_{\boldsymbol{r}}n_i+\beta_{\boldsymbol{r}},
    \qquad
    \boldsymbol{r}\in\mathcal{R},\quad
    \boldsymbol{n},\boldsymbol{n}+\boldsymbol{r}\in\Z_{\geq0}^K.
    \label{eq:multitype_source_affine}
\end{equation}
To keep the generating-function coefficients polynomial, we require $z_i\boldsymbol{z}^{\boldsymbol r}$ when $\alpha^{(i)}_{\boldsymbol r}\neq0$ and $\boldsymbol{z}^{\boldsymbol r}$ when $\beta_{\boldsymbol r}\neq0$ to contain no negative powers.
All of the applications in this manuscript satisfy these conditions.

The generating function satisfies the multivariate first-order PDE
\begin{equation}
    \partial_t F = \sum_{i = 1}^K A_i(\boldsymbol{z})\, \partial_{z_i} F + B(\boldsymbol{z})\, F,
    \label{eq:multitype_pde}
\end{equation}
with $A_i(\boldsymbol{z}) = \sum_{\boldsymbol{r}\in\mathcal{R}} \alpha^{(i)}_{\boldsymbol{r}}\, z_i\, \boldsymbol{z}^{\boldsymbol{r}}$ and $B(\boldsymbol{z}) = \sum_{\boldsymbol{r}\in\mathcal{R}} \beta_{\boldsymbol{r}}\, \boldsymbol{z}^{\boldsymbol{r}}$.
Let $\boldsymbol{A}:=(A_1,\ldots,A_K)$, and define the vector characteristic $\boldsymbol{\Phi}_t$ and scalar multiplier $K_t$ by
\begin{equation}
    \begin{aligned}
        \frac{\dd \boldsymbol{\Phi}_t(\boldsymbol{z})}{\dd t}
         & =\boldsymbol{A}\bigl(\boldsymbol{\Phi}_t(\boldsymbol{z})\bigr),
         & \boldsymbol{\Phi}_0(\boldsymbol{z})                                   & =\boldsymbol{z}, \\
        \frac{\dd K_t(\boldsymbol{z})}{\dd t}
         & =B\bigl(\boldsymbol{\Phi}_t(\boldsymbol{z})\bigr)K_t(\boldsymbol{z}),
         & K_0(\boldsymbol{z})                                                   & =1.
    \end{aligned}
    \label{eq:multitype_characteristic}
\end{equation}
On any domain where the characteristic, multiplier, and composition are defined, the method of characteristics gives
\begin{equation}
    F(\boldsymbol{z}, t) = K_t(\boldsymbol{z})\, F_0\bigl(\boldsymbol{\Phi}_t(\boldsymbol{z})\bigr).
    \label{eq:multitype_composition_multiplier}
\end{equation}
Setting $t=0$ in Eq.~\ref{eq:multitype_composition_multiplier} and using $\boldsymbol{\Phi}_0(\boldsymbol{z})=\boldsymbol{z}$ gives $F(\boldsymbol{z},0)=K_0(\boldsymbol{z})F_0(\boldsymbol{z})$.
Requiring this identity to recover the prescribed initial value $F(\boldsymbol{z},0)=F_0(\boldsymbol{z})$ for every $F_0$ fixes the normalization $K_0(\boldsymbol{z})=1$.
For multi-indices, $\boldsymbol{k}\leq\boldsymbol{n}$ means $k_i\leq n_i$ for every $i$.
Because multivariate Cauchy products respect componentwise order, the recursion at $\boldsymbol n$ uses only $\boldsymbol k\leq\boldsymbol n$, and Proposition~\ref{prop:in_window} extends to $0\leq n_i\leq N$ for every $i$.

Closure avoids the need to store the capped coefficient matrix on the product state space.
Instead, it stores one coefficient array for each of the $K$ characteristic components and one for the multiplier, all indexed by $\{0,\ldots,N\}^K$.
Direct Cauchy products sum over all componentwise decompositions of these multi-indices, so the arithmetic work still increases rapidly with $N$ and $K$, as the next proposition quantifies.
\begin{proposition}[Computational cost of multivariate closure]\label{prop:complexity}
    Consider a model family with $\Theta(K)$ nonzero rate terms, $K$ count axes, per-axis cap $N$, and joint dimension $D=(N+1)^K$.
    Let $S$ denote the number of coefficient-ODE right-hand-side evaluations.
    Let $s_{\mathrm P}\geq0$ denote the number of squarings used by dense Pad\'e scaling-and-squaring.
    On the capped joint coefficient matrix, this method requires $\Theta((1+s_{\mathrm P})D^3)=\Theta((1+s_{\mathrm P})(N+1)^{3K})$ floating-point operations and $\Theta(D^2)=\Theta((N+1)^{2K})$ memory~\cite{highamScalingSquaringMethod2009}.
    With direct Cauchy products, the multi-index closure uses $\Theta\!\left(SK\left((N+1)(N+2)/2\right)^K\right)=\Theta(SK(N+1)^{2K})$ floating-point operations for fixed $K$ and stores $\Theta(K (N+1)^K)$ coefficient entries.
    For fixed $K$ and $S$, the asymptotic ratio of floating-point operation counts as $N\to\infty$ is
    \begin{equation}
        \frac{\text{dense computational cost}}{\text{closure computational cost}} = \Theta\!\left(\frac{(1+s_{\mathrm P})(N+1)^K}{S\, K}\right),
        \label{eq:multitype_complexity_ratio}
    \end{equation}
    while the exact Cauchy-product count shows exponential dependence on $K$ at fixed $N$.
\end{proposition}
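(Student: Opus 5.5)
The plan is to count floating-point operations and memory separately for the two methods, and then form the ratio.

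\emph{Dense Pad\'e cost.} I will take the dense count from the standard analysis of scaling-and-squaring~\cite{highamScalingSquaringMethod2009}. The Pad\'e approximant of fixed degree needs a bounded number of $D\times D$ matrix products plus one linear solve, each costing $\Theta(D^3)$. Each of the $s_{\mathrm P}$ squarings adds one more product. Together this gives $\Theta((1+s_{\mathrm P})D^3)$. Storing a constant number of dense $D\times D$ arrays gives $\Theta(D^2)$ memory. Substituting $D=(N+1)^K$ yields the stated powers $(N+1)^{3K}$ and $(N+1)^{2K}$.

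\emph{Closure cost per right-hand-side evaluation.} Each of the $\Theta(K)$ rate terms contributes a polynomial in the truncated characteristic components and the multiplier, of degree bounded uniformly in $N$. Evaluating it by direct multivariate Cauchy products therefore reduces to a bounded number of pairwise truncated products on the box $\{0,\ldots,N\}^K$. A pairwise product, restricted to output indices $\boldsymbol n$ in the box, costs
\[
\sum_{\boldsymbol n}\#\{\boldsymbol k\le\boldsymbol n\}=\sum_{\boldsymbol n}\prod_{i=1}^K(n_i+1)=\Bigl(\sum_{m=0}^N(m+1)\Bigr)^K=\Bigl(\tfrac{(N+1)(N+2)}{2}\Bigr)^K.
\]
This is exactly where the factorized expression in the statement comes from, and it makes the exponential dependence on $K$ at fixed $N$ explicit. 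A matching lower bound holds because every such pair $(\boldsymbol k,\boldsymbol n-\boldsymbol k)$ is genuinely multiplied. For fixed $K$, $((N+1)(N+2)/2)^K=\Theta((N+1)^{2K})$. Multiplying by the $\Theta(K)$ rate terms and by the $S$ evaluations gives the claimed $\Theta(SK(N+1)^{2K})$. For memory, Proposition~\ref{prop:in_window} and its multivariate extension say the state consists of the $K$ characteristic arrays and one multiplier array, each of size $(N+1)^K$, plus $O(1)$ workspace arrays of the same size. That totals $\Theta(K(N+1)^K)$.

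\emph{Ratio.} Dividing the two operation counts gives $(1+s_{\mathrm P})(N+1)^{3K}/(SK(N+1)^{2K})$, which is \eqref{eq:multitype_complexity_ratio}.

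The step I expect to be the main obstacle is the $\Theta$ lower bound for the closure. The degree of the polynomial in each rate term must not make the constant depend on $N$, and I also need to say what is treated as fixed. I would handle this by assuming the maximal shift degree $r_{\max}$ is fixed across the model family. Degree-$d$ products are then formed by $d-1$ successive truncated pairwise products, each $\Theta(((N+1)(N+2)/2)^K)$, so the constant depends only on $r_{\max}$. I would state the $K$-dependence as holding only at fixed $K$, as the statement does, since the hidden constants in $\Theta((N+1)^{2K})$ vary like $2^{-K}$.
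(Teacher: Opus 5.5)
Your proposal is correct and follows essentially the same route as the paper's proof. For Pad\'e, both count a bounded number of dense products plus one LU solve and $s_{\mathrm P}$ further squarings, with $\Theta(D^2)$ work arrays; for the closure, both count $\Theta(K)$ Cauchy products of $\bigl((N+1)(N+2)/2\bigr)^K$ multiply-adds per right-hand side times $S$ evaluations, store $\Theta(KD)$ coefficients, and divide the two operation counts.

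Your explicit evaluation of $\sum_{\boldsymbol n}\prod_i(n_i+1)$ and your assumption that the maximal shift degree is fixed spell out what the paper only asserts. Both arguments tacitly need each of the $\Theta(K)$ terms to involve at least one genuine product to get the lower bound.
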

\begin{proof}
    Evaluating the bounded-degree Pad\'e approximant requires a fixed number of dense $D\times D$ matrix products and one dense linear solve, and each of the $s_{\mathrm P}$ subsequent squarings is another dense matrix product.
    A dense matrix product forms $D^2$ entries, each from a length-$D$ inner product, and therefore requires $\Theta(D^3)$ floating-point operations.
    A dense solve based on lower--upper (LU) factorization has the same cubic order, while a fixed number of $D\times D$ work arrays contains $\Theta(D^2)$ entries~\cite{highamScalingSquaringMethod2009}.
    The scaling parameter $s_{\mathrm P}$ grows at most logarithmically with the norm of the matrix argument.
    Each closure right-hand side contains $\Theta(K)$ Cauchy products; on $\{0,\ldots,N\}^K$ each has $\bigl((N+1)(N+2)/2\bigr)^K$ multiply-add terms.
    Storing the $KD$ coefficients requires $\Theta(KD)=\Theta(K(N+1)^K)$ memory.
    Multiplying the Cauchy-product work by $S$ gives $\Theta(SK(N+1)^{2K})$ floating-point operations.
    Taking the ratio and substituting $D=(N+1)^K$ gives Eq.~\ref{eq:multitype_complexity_ratio}.
    The value of $S$ depends on the tolerance and stiffness, so we hold it fixed only in the asymptotic comparison.
\end{proof}

\subsubsection{Finite multi-state extension}\label{sec:matrix}

Many applications track counts of residency in internal states, such as intermediate gene transcription states.
If every internal state has the same per-particle loss rate, then the count follows one scalar characteristic and only the multiplier becomes matrix-valued.

Let the positive integer $n_T$ denote the number of internal states.
For each count $m\geq0$, let the vector $\boldsymbol{P}_m(t)\in\R^{n_T}$ collect the corresponding components.
We consider
\begin{equation}
    \frac{\dd \boldsymbol{P}_m}{\dd t}
    =\mathbf{A}\boldsymbol{P}_m+\mathbf{B}\boldsymbol{P}_{m-1}
    +\mu\bigl((m+1)\boldsymbol{P}_{m+1}-m\boldsymbol{P}_m\bigr),
    \qquad m\geq0,\quad \boldsymbol{P}_{-1}\equiv0,
    \label{eq:matrix_count_master}
\end{equation}
where $\mathbf{A},\mathbf{B}\in\R^{n_T\times n_T}$ collect transitions that preserve and increment $m$, respectively, and $\mu\geq0$ is the per-object loss rate.

Define the vector-valued generating function $\boldsymbol{Z}(z,t):=\sum_{m=0}^{\infty}\boldsymbol{P}_m(t)z^m$ and its initial value $\boldsymbol{Z}_0(z):=\boldsymbol{Z}(z,0)$.
The closure acts componentwise on the internal-state vector: each component of $\boldsymbol{Z}$ is a scalar power series in $z$, while the matrices $\mathbf{A}$, $\mathbf{B}$, and $\mathbf{K}_t$ retain the coupling among components.
Multiplying Eq.~\ref{eq:matrix_count_master} by $z^m$ and summing gives the vector-valued first-order PDE
\begin{equation}
    \partial_t \boldsymbol{Z}(z, t) = (\mathbf{A} + z \mathbf{B})\, \boldsymbol{Z}(z, t) + \mu(1 - z)\, \partial_z \boldsymbol{Z}(z, t).
    \label{eq:matrix_count_pgf}
\end{equation}
Eq.~\ref{eq:matrix_count_pgf} has the same transport term as the scalar PDE, but its count-preserving and count-incrementing coefficients are matrices.

\begin{corollary}[Matrix recursion]\label{cor:matrix_recursion}
    For the hierarchy in Eq.~\ref{eq:matrix_count_master}, the scalar characteristic is
    \begin{equation}
        \Phi_t(z)=1-(1-z)e^{-\mu t}.
        \label{eq:matrix_characteristic}
    \end{equation}
    The solution has the representation
    \begin{equation}
        \boldsymbol{Z}(z, t) = \mathbf{K}_t(z)\, \boldsymbol{Z}_0\bigl(\Phi_t(z)\bigr),
        \label{eq:matrix_count_cm}
    \end{equation}
    where the matrix multiplier $\mathbf{K}_t(z) \in \mathbb{C}^{n_T \times n_T}$ solves
    \begin{equation}
        \frac{\dd \mathbf{K}_t(z)}{\dd t} = \mathbf{K}_t(z)\,\bigl(\mathbf{A} + \Phi_t(z)\, \mathbf{B}\bigr),\qquad \mathbf{K}_0(z) = \mathbf{I}.
        \label{eq:matrix_multiplier}
    \end{equation}
    Here $\mathbf{I}$ is the $n_T\times n_T$ identity matrix.
    Writing $\mathbf{K}_t(z)=\sum_{n=0}^{\infty}\mathbf{Q}_n(t)z^n$ with $\mathbf{Q}_n(t)\in\R^{n_T\times n_T}$, the matrix-valued power-series coefficients satisfy
    \begin{equation}
        \dot{\mathbf{Q}}_n(t)
        =\mathbf{Q}_n(t)\mathbf{A}
        +\sum_{\substack{j+k=n\\j,k\geq0}}\mathbf{Q}_j(t)\phi_k(t)\mathbf{B},
        \qquad
        \mathbf{Q}_n(0)=\delta_{n,0}\mathbf{I},
        \qquad n\geq0,
        \label{eq:matrix_coefficient_recursion}
    \end{equation}
    where $\phi_k(t)=[z^k]\Phi_t(z)$.
    Thus the coefficients through any index $M$ form a lower-triangular ODE with $n_T\times n_T$ blocks, the block analogue of the scalar structure in Figure~\ref{fig:dependency_structure}(b).
\end{corollary}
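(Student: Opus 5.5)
The proof follows the characteristic argument of Theorem~\ref{thm:cm}, with one twist: the multiplier is now a matrix, so the order of the factors must be tracked. I would proceed in three steps.

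\emph{Step 1: the characteristic.} Comparing Eq.~\ref{eq:matrix_count_pgf} with Eq.~\ref{eq:first_order_pde} gives the transport coefficient $A(z)=\mu(1-z)$. The characteristic ODE of Eq.~\ref{eq:characteristic} is then the linear equation $\dot\Phi=\mu(1-\Phi)$ with $\Phi_0(z)=z$. Its solution is Eq.~\ref{eq:matrix_characteristic}, which is entire in $z$ and affine in $z$. So $\phi_0(t)=1-e^{-\mu t}$, $\phi_1(t)=e^{-\mu t}$ and $\phi_k\equiv0$ for $k\geq2$. I would also record two identities used below: $\partial_z\Phi_t(z)=e^{-\mu t}$ and $\mu(1-z)e^{-\mu t}=\mu(1-\Phi_t(z))$.

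\emph{Step 2: the representation Eq.~\ref{eq:matrix_count_cm}.} I would verify it directly rather than repeat the backward-characteristic argument, since a direct check makes the right multiplication in Eq.~\ref{eq:matrix_multiplier} transparent.

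First, I would establish the cocycle identity $\mathbf K_{t+s}(z)=\mathbf K_s(z)\,\mathbf K_t(\Phi_s(z))$. Regard both sides as functions of $t$. Each satisfies $\dd G/\dd t=G\,(\mathbf A+\Phi_{t+s}(z)\mathbf B)$, using the flow property $\Phi_t(\Phi_s(z))=\Phi_{t+s}(z)$ for the right-hand side. Both equal $\mathbf K_s(z)$ at $t=0$, so uniqueness for linear ODEs gives the identity.

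Next, I would differentiate the cocycle identity in $s$ at $s=0$. This yields the ``forward'' equation
\[
\partial_t\mathbf K_t(z)=(\mathbf A+z\mathbf B)\mathbf K_t(z)+\mu(1-z)\,\partial_z\mathbf K_t(z).
\]
Thus $\mathbf K_t$ itself solves the PDE of Eq.~\ref{eq:matrix_count_pgf} columnwise.

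Finally, I would differentiate $\mathbf K_t(z)\boldsymbol Z_0(\Phi_t(z))$ in $t$ using the product rule. The term $\partial_t\mathbf K_t\,\boldsymbol Z_0(\Phi_t)$ supplies $(\mathbf A+z\mathbf B)(\cdot)$ together with the $\partial_z\mathbf K_t$ part of the transport term. The term $\mathbf K_t\,\boldsymbol Z_0'(\Phi_t)\,\mu(1-\Phi_t)$ equals $\mu(1-z)\mathbf K_t\,\partial_z[\boldsymbol Z_0(\Phi_t)]$ by the identities from Step 1. Hence the composition solves Eq.~\ref{eq:matrix_count_pgf}, and it equals $\boldsymbol Z_0$ at $t=0$. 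Uniqueness follows componentwise along characteristics exactly as in the proof of Theorem~\ref{thm:cm}: along $\chi(s)=\Phi_{t-s}(z)$, any solution satisfies a linear ODE system with continuous coefficients.

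\emph{Step 3: the coefficient recursion.} Treat $\mathbf K_t(z)$ as a formal power series with matrix coefficients $\mathbf Q_n(t)$. Applying $[z^n]$ of Eq.~\ref{eq:coefficient_extraction} to Eq.~\ref{eq:matrix_multiplier}, the product $\mathbf K_t\Phi_t\mathbf B$ becomes the Cauchy sum $\sum_{j+k=n}\mathbf Q_j\phi_k\mathbf B$, since $\phi_k$ is scalar. This gives Eq.~\ref{eq:matrix_coefficient_recursion}, and $\mathbf K_0=\mathbf I$ gives $\mathbf Q_n(0)=\delta_{n,0}\mathbf I$. Only indices $j\leq n$ appear, and only $j\in\{n-1,n\}$ actually occur because $\phi_k=0$ for $k\geq2$. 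So the system through index $M$ is block lower triangular with $n_T\times n_T$ blocks. Exactness on a window then follows by the induction of Proposition~\ref{prop:in_window}; the system is now linear, so the solutions are global in time.

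The main obstacle is Step 2. Because $\mathbf A$ and $\mathbf B$ need not commute, a naive integration along the backward characteristic produces a propagator whose factor ordering and $t$-dependence are easy to get wrong. The cocycle identity is the device I would rely on to get both the ordering and the forward PDE for $\mathbf K_t$ cleanly.
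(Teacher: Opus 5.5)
Your proposal is correct and follows essentially the same route as the paper: the cocycle identity $\mathbf{K}_{t+s}(z)=\mathbf{K}_s(z)\mathbf{K}_t(\Phi_s(z))$, differentiated at $s=0$, gives the forward equation $\partial_t\mathbf{K}_t=(\mathbf{A}+z\mathbf{B})\mathbf{K}_t+\mu(1-z)\partial_z\mathbf{K}_t$, which verifies Eq.~\ref{eq:matrix_count_cm}, and coefficient extraction from Eq.~\ref{eq:matrix_multiplier} gives Eq.~\ref{eq:matrix_coefficient_recursion}. You also fill in details the paper leaves implicit: the ODE-uniqueness proof of the cocycle identity, the explicit product-rule check via $\mu(1-\Phi_t(z))=\mu(1-z)\partial_z\Phi_t(z)$, and uniqueness along backward characteristics.
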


Because matrix multiplication does not commute, the characteristic cocycle determines the order in Eq.~\ref{eq:matrix_multiplier}.
\begin{proof}
    The transport coefficient in Eq.~\ref{eq:matrix_count_pgf} is $\mu(1-z)$, so Eq.~\ref{eq:matrix_characteristic} solves $\dot\Phi_t=\mu(1-\Phi_t)$ with $\Phi_0(z)=z$.
    Set $\mathbf{C}(z)=\mathbf{A}+z\mathbf{B}$ and follow the scalar characteristic $\Phi_t$.
    The right-ordered multiplier has the cocycle identity $\mathbf{K}_{t+s}(z)=\mathbf{K}_s(z)\mathbf{K}_t(\Phi_s(z))$.
    Differentiating the cocycle identity at $s=0$ gives
    \begin{equation}
        \mu(1-z)\partial_z\mathbf{K}_t(z)+\mathbf{C}(z)\mathbf{K}_t(z)
        =\mathbf{K}_t(z)\mathbf{C}(\Phi_t(z))
        =\partial_t\mathbf{K}_t(z).
        \label{eq:matrix_cocycle_derivative}
    \end{equation}
    The last equality in Eq.~\ref{eq:matrix_cocycle_derivative} is Eq.~\ref{eq:matrix_multiplier}.
    The cocycle identity verifies Eq.~\ref{eq:matrix_count_cm} for column vectors, and expansion of Eq.~\ref{eq:matrix_multiplier} gives Eq.~\ref{eq:matrix_coefficient_recursion}.
\end{proof}

Because the characteristic in Eq.~\ref{eq:matrix_characteristic} is affine in $z$, the equation for $\mathbf Q_n$ in Eq.~\ref{eq:matrix_coefficient_recursion} couples only $\mathbf Q_n$ and, for $n\geq1$, $\mathbf Q_{n-1}$.
For a nonnegative integer count cap $M$, integrating the coefficients of $\mathbf{K}_t$ through index $M$ therefore requires $\Theta(SMn_T^3)$ floating point operations and $\Theta(Mn_T^2)$ memory for $S$ right-hand-side evaluations, without a boundary condition at $M+1$.
Dense Pad\'e on the joint state of dimension $n_T(M+1)$ requires $\Theta(n_T^3 M^3)$ time and $\Theta(n_T^2 M^2)$ memory.
Relative to dense Pad\'e, closure saves a factor of order $M^2/S$ in time and $M$ in memory.
The shared loss rate is essential: state-specific loss rates produce different count characteristics and fall outside the scalar-drift construction~\cite{cuchieroAffineProcessesPositive2011}.

\subsubsection{Stationary closure}\label{sec:steady}

At stationarity, the full state vector $p$ satisfies $\mathcal{L}p=0$, and its components sum to one.
Setting $\partial_t\boldsymbol{Z}=0$ in Eq.~\ref{eq:matrix_count_pgf} gives a matrix ODE in $z$ for the vector-valued probability generating function (PGF).
A Frobenius expansion selects the solution regular at $z=1$, and a contour fast Fourier transform (FFT) recovers its coefficients; we refer to this procedure PGF-FFT.
Alternatively, setting Eq.~\ref{eq:matrix_count_master} to zero gives the block-tridiagonal recurrence
\begin{equation}
    \mu(m+1)\boldsymbol{P}_{m+1}-(\mu m\mathbf{I}-\mathbf{A})\boldsymbol{P}_m+\mathbf{B}\boldsymbol{P}_{m-1}=0,
    \qquad m\geq0.
    \label{eq:steady_recurrence}
\end{equation}
Block-Thomas elimination~\cite{neutsMatrixgeometricSolutionsStochastic1994} solves the capped recurrence in $\mathcal{O}(Mn_T^3)$ work; we propagate backward because forward iteration amplifies nonphysical growing solutions.
On a contour of radius $0<r_c<1$, PGF-FFT avoids a boundary condition at $M$ but amplifies roundoff by the Cauchy factor $r_c^{-m}$.
As a general-purpose baseline, sparse LU factorization solves the capped stationary system after one equation is replaced by the normalization constraint~\cite{davisDirectMethodsSparse2006}.
Dense SVD instead uses the right singular vector associated with the smallest singular value of the capped coefficient matrix~\cite{MatrixComputations4th}.

\subsubsection{Closure implementation}\label{sec:closure_implementation}

At a final time $T$, we first integrate Eq.~\ref{eq:coefficient_ode_truncated}, then construct $[z^n]\bigl(K_T^{(N)}(z)(\Phi_T^{(N)}(z))^m\bigr)$ for $m\leq M_0$ by Cauchy products and apply the resulting kernel to the initial data.
With $S$ right-hand-side evaluations, the computational costs of coefficient integration, kernel construction, and matrix-vector multiplication are $\Theta(SN^2)$, $\Theta(N^2M_0)$, and $\Theta(NM_0)$, respectively.
FFT convolution accelerates the scalar and multitype products in the experiments.

\subsection{Operator splitting}\label{sec:splitting}

Closure removes the upper-boundary approximation only when the complete hierarchy is linear-rate.
For a broader class of ODE hierarchies, we decompose the generator as
\begin{equation}
    \mathcal{L} = \mathcal{A} + \mathcal{B},
    \label{eq:hybrid_split}
\end{equation}
with a linear-rate part $\mathcal A$ and a nonlinear-rate remainder $\mathcal B$.
Let $\Pi_N$ denote projection onto the selected finite index window.
For one count axis, $\Pi_N$ sets components with $n>N$ to zero; for $K$ count axes, it sets components outside $\{0,\ldots,N\}^K$ to zero.
We advance the capped remainder $\mathcal B_N=\Pi_N\mathcal B\Pi_N$ with a sparse solver.
For a step size $\Delta t>0$, unprojected Strang splitting gives the second-order approximation
\begin{equation}
    \tilde p(t + \Delta t) = e^{(\Delta t/2)\,\mathcal{A}}\, e^{\Delta t\, \mathcal{B}}\, e^{(\Delta t/2)\,\mathcal{A}}\, p(t).
    \label{eq:strang}
\end{equation}

Let $A_{\mathcal A}$ and $B_{\mathcal A}$ denote the generating-function polynomials in Eq.~\ref{eq:first_order_pde} for the linear-rate coefficient operator $\mathcal A$.
Let $\Phi_\tau$ and $K_\tau$ denote the corresponding characteristic and multiplier from Eqs.~\ref{eq:characteristic} and~\ref{eq:multiplier}.
For $\tau\geq0$, we write $\mathcal{C}_{\tau,N}:=\Pi_Ne^{\tau\mathcal{A}}\Pi_N$ for the action on the finite index window computed by closure.
For one count axis and an input $q=(q_0,\ldots,q_N)$ supported on this index window, define $Q(z):=\sum_{m=0}^Nq_mz^m$.
The characteristic and multiplier retain the fixed initial values $\Phi_0(z)=z$ and $K_0(z)=1$ for every substep; the arbitrary input $q$ enters only through composition with $Q$:
\begin{equation}
    \bigl(\mathcal{C}_{\tau,N}q\bigr)_n
    =[z^n]\bigl(K_\tau(z)Q(\Phi_\tau(z))\bigr)
    =\sum_{m=0}^Nq_m[z^n]\bigl(K_\tau(z)\Phi_\tau(z)^m\bigr),
    \qquad 0\leq n\leq N.
    \label{eq:closure_arbitrary_input}
\end{equation}
The bracketed coefficients in Eq.~\ref{eq:closure_arbitrary_input} form a kernel that depends on $\mathcal A$, $\tau$, and $N$ but not on $q$.
For fixed $\Delta t$, we therefore build this kernel once at $\tau=\Delta t/2$ and apply it to the arbitrary intermediate state entering either linear-rate half-step.
The implemented capped Strang map is
\begin{equation}
    \mathcal{S}_{\Delta t,N}
    :=\mathcal{C}_{\Delta t/2,N}\,e^{\Delta t\mathcal{B}_N}\,\mathcal{C}_{\Delta t/2,N}.
    \label{eq:implemented_strang}
\end{equation}
For fixed $N$, let $p^j:=\mathcal{S}_{\Delta t,N}^j\Pi_Np(0)$ denote the numerical state after $j$ steps.
Figure~\ref{fig:closure_split} shows the closure calculation and one complete Strang step.

\begin{figure}[H]
    \centering
    \begin{tikzpicture}[
        font=\footnotesize,
        >={Latex[length=2mm]},
        flow/.style={->, semithick},
        box/.style={draw=black!70, fill=black!3, rounded corners=1pt, align=center, inner sep=3pt, minimum height=9mm},
        linear/.style={box, fill=blue!7},
        remainder/.style={box, fill=orange!10}
        ]
        \node[box, text width=3.0cm, anchor=west] (pgf) at (0,1.25) {$p^j=(p_0^j,\ldots,p_N^j)$\\$G^j(z)=\sum_{n=0}^N p_n^jz^n$};
        \node[font=\bfseries, anchor=south west] at ([yshift=1mm]pgf.north west) {Closure $\mathcal{C}_{\tau,N}$};
        \node[box, text width=4.0cm, right=4mm of pgf] (ode) {$\dot\Phi=A_{\mathcal A}(\Phi)$,\quad $\dot K=B_{\mathcal A}(\Phi)K$\\integrate coefficients $0{:}N$};
        \node[box, text width=3.6cm, right=4mm of ode] (extract) {$\displaystyle (\mathcal C_{\tau,N} p^j)_n=[z^n]\bigl(K_\tau G^j(\Phi_\tau)\bigr)$\\$0\leq n\leq N$};
        \draw[flow] (pgf) -- (ode);
        \draw[flow] (ode) -- (extract);
        \node[font=\scriptsize, below=1mm of ode] {No value at $n=N+1$ enters the coefficient ODE.};

        \node[box, anchor=west, minimum width=1.4cm] (p0) at (0,-1.35) {$p^j$};
        \node[font=\bfseries, anchor=south west] at ([yshift=1mm]p0.north west) {Strang step};
        \node[linear, right=3mm of p0, minimum width=2.2cm] (c1) {$\mathcal C_{\Delta t/2,N}$};
        \node[remainder, right=3mm of c1, minimum width=3.0cm] (bn) {$e^{\Delta t\mathcal B_N}$\\sparse solve};
        \node[linear, right=3mm of bn, minimum width=2.2cm] (c2) {$\mathcal C_{\Delta t/2,N}$};
        \node[box, right=3mm of c2, minimum width=1.5cm] (p1) {$p^{j+1}$};
        \draw[flow] (p0) -- (c1);
        \draw[flow] (c1) -- (bn);
        \draw[flow] (bn) -- (c2);
        \draw[flow] (c2) -- (p1);
        \node[font=\scriptsize, below=1mm of c1] {linear-rate};
        \node[font=\scriptsize, below=1mm of bn] {nonlinear-rate remainder};
        \node[font=\scriptsize, below=1mm of c2] {linear-rate};
    \end{tikzpicture}
    \caption{How coefficient closure enters one operator-splitting step.
        The top row computes the requested coefficients of the linear-rate propagator without prescribing a value at $N+1$.
        The bottom row uses the linear-rate propagator for both half-steps and projects onto the capped index window used by the sparse nonlinear-rate middle step.}
    \label{fig:closure_split}
\end{figure}

Closure operators, when restricted to a finite index window, fail to form a semigroup when a half-step creates components above $N$ that a later substep would then return to that index window.
We therefore compare with unprojected Strang splitting and choose timesteps small enough to bound every projection error.
Suppose we are interested in solving the system to a fixed time $T>0$. Conditional on this choice, we choose $\Delta t$ such that $J:=T/\Delta t$ is a positive integer.
For step $k\in\{0,\ldots,J-1\}$, let the nonnegative quantities $\delta_{\mathcal B,k}$ and $\delta_{\mathrm{pre},k}$ bound the tails outside the index window during the unprojected $\mathcal B$ substep and immediately before projection after the preceding linear-rate half-step, respectively.
Assume the stability and commutator bounds for Strang splitting on a weighted $\ell^1$ space~\cite{strangConstructionComparisonDifference1968,mclachlanSplittingMethods2002}.
Then a telescoping argument gives nonnegative constants $C_{\mathrm{split}}$, $C_{\mathcal A}$, and $C_{\mathcal B}$ such that
\begin{equation}
    \left\|\Pi_N p(T)-p^J\right\|_1
    \leq C_{\mathrm{split}}T\Delta t^2
    +C_{\mathcal B}T\max_{0\leq k<J}\delta_{\mathcal B,k}
    +C_{\mathcal A}T\max_{0\leq k<J}\delta_{\mathrm{pre},k}.
    \label{eq:strang_window_error}
\end{equation}
We obtain Eq.~\ref{eq:strang_window_error} by inserting the unprojected solution after each substep and telescoping over the $J$ steps.
Stability propagates each local splitting or projection defect, and summation gives the three terms proportional to $T$.
The three terms in Eq.~\ref{eq:strang_window_error} bound the splitting error, the remainder tail, and the tail discarded after a linear-rate half-step, respectively.
The bound omits numerical error in the two subsolvers; in the experiments, we choose their tolerances or step refinements below the reported splitting and index-window projection effects, except where we explicitly identify a reference-resolution floor.
The final term vanishes for a pure-death linear-rate half-step and, more generally, whenever the linear-rate flow remains inside the selected index window.
We estimate both index-window projection terms by cap refinement or comparison with an independently converged expensive large-cap reference.

\subsubsection{Richardson extrapolation}\label{sec:richardson}

When the index-window projection error lies below the time-discretization error, we cancel the leading $\Delta t^2$ commutator term by combining Strang runs with $J$ and $2J$ steps as
\begin{equation}
    p_{\mathrm{Rich},N}^J(T)
    =\tfrac{4}{3}\,\mathcal{S}_{\Delta t/2,N}^{2J}\bigl(\Pi_Np(0)\bigr)
    -\tfrac{1}{3}\,\mathcal{S}_{\Delta t,N}^{J}\bigl(\Pi_Np(0)\bigr).
    \label{eq:richardson}
\end{equation}
Eq.~\ref{eq:richardson} leaves an $\mathcal{O}(\Delta t^4)$ temporal term at the cost of about three times the coarse-run computation.
Unlike higher-order real compositions, Richardson requires no negative dissipative substeps~\cite{shengSolvingLinearPartial1989,suzukiGeneralTheoryFractal1991}.
The extrapolated vector need not remain nonnegative, so we use Richardson as an accuracy correction rather than as a Markov propagator.

\subsubsection{Splitting implementation}\label{sec:splitting_implementation}

For one count axis, we implement Eq.~\ref{eq:closure_arbitrary_input} as a matrix-vector multiplication with the stored half-step kernel.
We advance $\mathcal{B}$ with sparse-Jacobian Rosenbrock23~\cite{rackauckasDifferentialEquationsjlPerformantFeatureRich2017}.
For stationary hybrid Markov models, we seek the fixed point of the closure-Strang step map by power iteration.
If $\mathcal{A}=\bigoplus_{i=1}^K\mathcal{A}_i$, then we apply its propagator through $K$ mode-wise tensor contractions without materializing the full product-space coefficient matrix.
The computational cost is $\Theta((N+1)^{3K})$ for dense exponentiation and $\Theta(JK(N+1)^{K+1})$ for $J$ sets of $K$ mode contractions.
Thus the asymptotic ratio is
\begin{equation}
    \frac{\text{dense computational cost}}{\text{Kronecker--Strang computational cost}}
    = \Theta\!\left(\frac{(N+1)^{2K - 1}}{J\, K}\right).
    \label{eq:kronstrang_complexity_ratio}
\end{equation}
\subsection{Applications}\label{sec:applications}

For the four model families, we assign transitions to the closure part $\mathcal A$ and the remainder $\mathcal B$ as follows.

\paragraph{Branching}
Branching processes model populations in which individuals reproduce or die independently, as in cell-population models~\cite{athreyaBranchingProcesses1972a,kimmelBranchingProcessesBiology2015}.
In the scalar benchmark, each particle dies at rate $\mu$ or divides into two at rate $\lambda$.
Thus a population of size $n$ jumps to $n-1$ at rate $\mu n$ and to $n+1$ at rate $\lambda n$.
In the four-type model, a type-$i$ particle for $i\in\{1,\ldots,4\}$ dies, creates another type-$i$ particle, or creates a particle of the next type in a cycle, with each event occurring at a per-particle rate.
All rates are linear in the population counts, so we apply the closure of Section~\ref{sec:closure} to the complete forward generator and set $\mathcal{B}=0$.

\paragraph{Schl\"ogl kinetics}
The Schl\"ogl model describes the molecule count $X$ in a well-mixed reactor with two reversible reaction pairs~\cite{schloglChemicalReactionModels1972,vellelaStochasticDynamicsNonequilibrium2009}.
Let $V>0$ denote the reactor-volume parameter that scales the stochastic reaction propensities.
The pair $\varnothing\rightleftharpoons X$ gives state-independent production and per-molecule loss.
The pair $2X\rightleftharpoons3X$ produces nonlinear positive feedback and, in the parameter regime used below, two favored ranges of molecule counts.
We place $\varnothing\rightleftharpoons X$ in $\mathcal{A}$ and advance it by closure.
The quadratic and cubic propensities form a tridiagonal remainder $\mathcal{B}$, which we advance on a capped index window with the splitting method of Section~\ref{sec:splitting}.

\paragraph{Cyclic predator-prey}
Let $X_i$ denote the population of species $i\in\{1,\ldots,K\}$, and let $\boldsymbol{x}=(x_1,\ldots,x_K)\in\Z_{\geq0}^K$ denote a current population vector.
Each species receives individuals at a constant immigration rate and loses individuals at a constant per-capita death rate.
With species indices interpreted modulo $K$, the interaction $X_i+X_{i+1}\to2X_{i+1}$ removes one individual of species $i$ and adds one of species $i+1$ at rate $\gamma x_i x_{i+1}$, where $\gamma\geq0$ is the interaction-rate constant.
Let $\mathcal{L}_i^{\mathrm{lin}}$ denote the forward generator of the independent immigration-death process for species $i$.
For $a\in\{1,\ldots,K\}$, let $b=a+1$ modulo $K$.
We split the joint forward generator as
\begin{equation}
    \begin{aligned}
        \mathcal{L}
         & =\bigoplus_{i=1}^{K}\mathcal{L}_i^{\mathrm{lin}}+\mathcal{L}_{\mathrm{pred}}, \\
        \bigl(\mathcal{L}_{\mathrm{pred}}\bigr)_{(\ldots,x_a-1,x_b+1,\ldots),\,(\ldots,x_a,x_b,\ldots)}
         & =\gamma x_ax_b,
        \qquad x_a\geq1.
    \end{aligned}
    \label{eq:predprey_split}
\end{equation}
Column conservation sets $(\mathcal{L}_{\mathrm{pred}})_{\boldsymbol{x},\boldsymbol{x}}=-\gamma\sum_{a=1}^K x_a x_{a+1}$ with cyclic indexing.
The direct-sum term in Eq.~\ref{eq:predprey_split} forms $\mathcal{A}$, and $\mathcal{L}_{\mathrm{pred}}$ forms $\mathcal{B}$.
We advance $\mathcal{A}$ by the multivariate closure of Section~\ref{sec:closure} and $\mathcal{B}$ by a capped sparse solve inside the splitting method of Section~\ref{sec:splitting}.
Applying the linear-rate propagator in Eq.~\ref{eq:predprey_split} one coordinate at a time avoids materializing the coefficient matrix on $(N+1)^K$ states.

\paragraph{Telegraph and gene/RNA elongation}
The telegraph model represents stochastic gene expression by coupling an unbounded mRNA count $m$ to a gene that switches between inactive and active states~\cite{peccoudMarkovianModelingGeneProduct1995,shahrezaeiAnalyticalDistributionsStochastic2008}.
The gene/RNA (G/R) extension also resolves a finite chain of polymerase elongation stages.
Selected internal-state transitions produce an mRNA molecule, and each existing mRNA degrades independently.
Both models have the form of Eq.~\ref{eq:matrix_count_master}.
The matrix $\mathbf{B}$ is diagonal when production leaves the internal state unchanged and off-diagonal when production also advances the elongation chain.
We benchmark both the full finite-state closure of Section~\ref{sec:closure} and the pure-degradation split
\begin{equation}
    \mathcal{A} : \dot{\boldsymbol{P}}_m = \mu\bigl((m+1) \boldsymbol{P}_{m+1} - m \boldsymbol{P}_m\bigr),
    \qquad
    \mathcal{B} : \dot{\boldsymbol{P}}_m = \mathbf{A} \boldsymbol{P}_m + \mathbf{B} \boldsymbol{P}_{m-1},
    \qquad m\geq0.
    \label{eq:telegraph_split}
\end{equation}
In Eq.~\ref{eq:telegraph_split}, binomial thinning advances $\mathcal{A}$ exactly.
The coefficient operator $\mathcal{B}$ is lower triangular in $m$.
As in Figure~\ref{fig:dependency_structure}(b), this one-sided dependence eliminates return flux from above the requested index window.
We combine the two propagators by the splitting method of Section~\ref{sec:splitting}.
The supplement lists all numerical rates, initial conditions, caps, step counts, and reference settings.

\section{Results}\label{sec:numerics}

For transient problems, we compare closure and splitting with dense Pad\'e scaling-and-squaring for the matrix exponential~\cite{highamScalingSquaringMethod2009}, Krylov FSP~\cite{munskyFiniteStateProjection2006,saadAnalysisKrylovSubspace1992}, and uniformization~\cite{MarkoffChainsAid}.
The stationary comparisons additionally include PGF-FFT, block-Thomas elimination~\cite{neutsMatrixgeometricSolutionsStochastic1994}, sparse LU factorization~\cite{davisDirectMethodsSparse2006}, and dense SVD~\cite{MatrixComputations4th}.
Dense Pad\'e and dense SVD act on full capped coefficient matrices, while Krylov FSP, uniformization, sparse LU, and block-Thomas exploit matrix action, sparsity, or block structure; PGF-FFT works directly with the generating function.
We report warmed wall-clock times on an AMD Ryzen AI Max+ 395 workstation with 128\,GB RAM and $\ell^1$ differences computed on the stated index window relative to the stated reference.
For brevity, we call these differences errors; when the reference is numerical, they are not independently certified absolute errors and cannot resolve errors below the accuracy of the reference itself.
The benchmark sequence introduces a long tail, several count axes, nonlinear rates, and a finite internal state.
The supplement records all parameters, initial conditions, cap and step sweeps, solver tolerances, reference constructions, repetition counts, and figure-to-data mappings.

\subsection{Linear-rate multitype branching}\label{sec:exp_affine}

The scalar experiment isolates the effect of a long probability tail, whereas the four-type experiment exposes the $(N+1)^4$ growth of a conventionally truncated joint state space.
Figure~\ref{fig:ktype_branching} compares against the analytic geometric tail for $K=1$ and a direct-Cauchy-product closure reference for $K=4$.
The scalar closure marker evaluates the analytic formula in Eq.~\ref{eq:bd_geometric_tail}, so it does not time the general coefficient ODE.
For $K=4$, we integrate the reference coefficient ODE with relative tolerance $10^{-9}$ and absolute tolerance $10^{-12}$; because the closure solution itself is the reference, the figure does not measure its error independently.

\begin{figure}[!ht]
    \centering
    \includegraphics[width=0.98\textwidth]{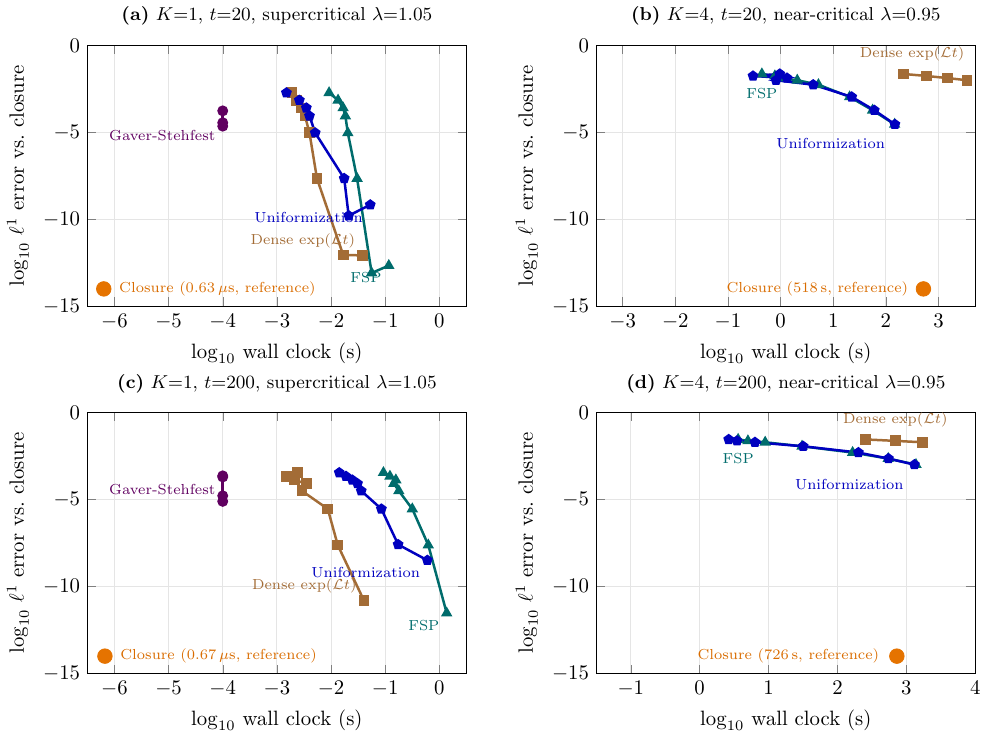}
    \caption{Wall clock versus $\ell^1$ error on the stated index window relative to the analytic ($K=1$) or closure ($K=4$) reference at $t=20$ (top) and $t=200$ (bottom).
        The closure markers report the reference runtimes and are placed near the lower plotting boundary because their errors relative to themselves are zero.
        Left: scalar birth-death on $0\leq n\leq100$; right: four-type cyclic cross-production on $[0,10]^4$.
        We sweep the truncation cap for dense Pad\'e, Krylov FSP, and uniformization; we apply Gaver-Stehfest only to the scalar case.}
    \label{fig:ktype_branching}
\end{figure}

At the four-type cap $N=12$, dense Pad\'e takes $2.4$-$2.8\times$ as much wall time as closure while retaining $1.3$-$2.0\times10^{-2}$ error on the reported index window.
FSP and uniformization run faster at these caps but retain the same boundary error.

\FloatBarrier
\subsection{Scalar nonlinear-rate splitting}\label{sec:exp_schlogl}

We use the bistable Schl\"ogl regime from Section~\ref{sec:applications} at two values of the reactor-volume parameter, $V\in\{25,500\}$.
Its quadratic and cubic propensities put the complete forward generator outside the closure class.
The experiment tests the split method on a sparse remainder.

\begin{figure}[!ht]
    \centering
    \includegraphics[width=0.95\textwidth]{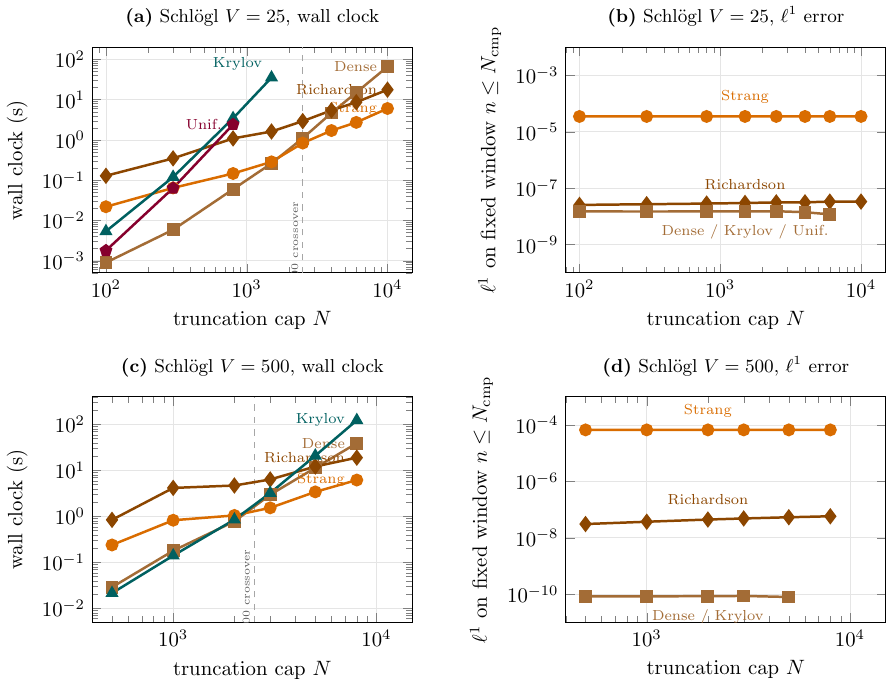}
    \caption{Schl\"ogl wall clock (left) and $\ell^1$ difference from the reference (right) versus cap $N$ at reactor-volume parameter $V=25$ (top) and $V=500$ (bottom).
        The right panels use the fixed index windows $0\leq n\leq50$ and $0\leq n\leq200$, relative to dense Pad\'e references at caps $10{,}000$ and $8{,}000$, respectively.
        We use 80 closure-Strang steps and combine the 80- and 160-step results for Richardson extrapolation.
        We compute dense Pad\'e, Krylov FSP, and uniformization on the full capped coefficient matrix.}
    \label{fig:schlogl_scaling}
\end{figure}

Closure-Strang crosses dense Pad\'e near $N=2500$.
At $V=500,N=8000$, we measure $6.2$\,s for closure-Strang, $39.2$\,s for dense Pad\'e, and $124$\,s for Krylov FSP.
At the largest cap for each volume, the 80-step Strang differences are $3.5\times10^{-5}$ and $6.7\times10^{-5}$, while the corresponding Richardson differences are $3.4\times10^{-8}$ and $5.8\times10^{-8}$.

\FloatBarrier
\subsection{Multi-species predator-prey with Kronecker-factored Strang}\label{sec:exp_predator_prey}

The two- and eight-species instances of Eq.~\ref{eq:predprey_split} test a sparse bilinear remainder on a product state space.
A cap $N$ on each of $K$ populations produces $(N+1)^K$ joint states even when $N$ is small.
\begin{figure}[!ht]
    \centering
    \includegraphics[width=0.98\textwidth]{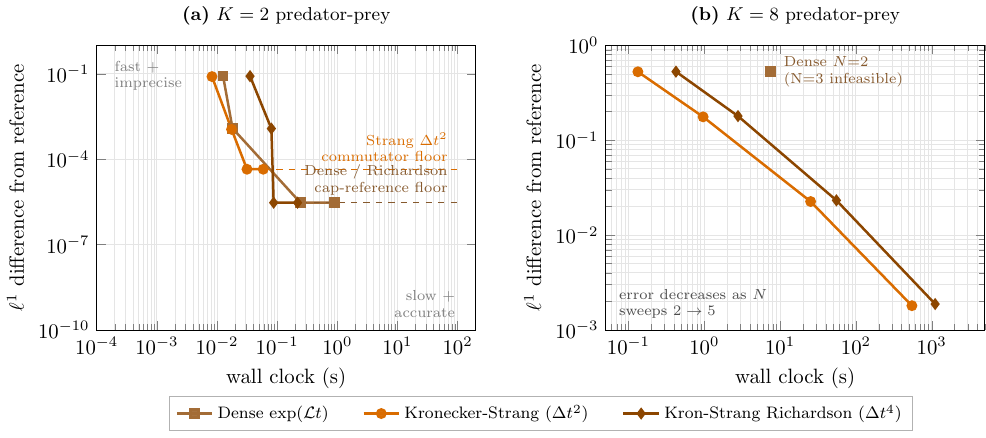}
    \caption{Wall clock versus $\ell^1$ difference from a refined Kronecker-Strang reference on the fixed index windows $[0,15]^2$ and $[0,2]^8$ for two- and eight-species predator-prey, respectively.
        \emph{Dense $\exp(\mathcal{L}t)$} denotes dense Pad\'e evaluation of the exponential of the full capped generator.
        \emph{Kronecker-Strang} applies the immigration-death half-steps by mode-wise tensor contractions and the bilinear predation step by sparse Krylov, while \emph{Kron-Strang Richardson} combines runs with step sizes $\Delta t$ and $\Delta t/2$ to cancel the leading splitting error; ``Kron'' abbreviates ``Kronecker.''
        The labels $\Delta t^2$ and $\Delta t^4$ give the leading global temporal-error orders of the two split traces.
        We sweep the per-species cap; dense Pad\'e is infeasible past $N=2$ for $K=8$.
        Richardson removes the leading commutator error but not the tail beyond the retained index window.}
    \label{fig:kspecies_predprey}
\end{figure}

At $K=2$, both factored Strang curves are faster than dense Pad\'e by joint dimension $961$.
For $N\geq40$, the same-cap difference from dense Pad\'e falls from $5.1\times10^{-5}$ for Strang to $7.9\times10^{-9}$ after Richardson, whereas the plotted fixed-window difference from the refined high-cap reference levels off near $3.0\times10^{-6}$.
At $K=8,N=2$, factored Strang and dense Pad\'e have fixed-window differences $0.527$ and $0.530$, respectively, while factored Strang is $56\times$ faster ($0.13$ versus $7.4$\,s).
At $N=3$, factored Strang takes $0.97$\,s while dense Pad\'e exceeds available memory; at $N=5$, factored Strang reaches a fixed-window difference of $1.8\times10^{-3}$ in $548$\,s.

\FloatBarrier
\subsection{Telegraph and G/R elongation chain}\label{sec:exp_telegraph}

We test the G/R model of Section~\ref{sec:applications} with $n_T\in\{6,14\}$ internal gene and elongation states.
After capping the mRNA count at $M$, the joint state has size $n_T(M+1)$, while the common degradation rate leaves one scalar characteristic for the entire chain.
The supplement gives the transition rates, time horizon, coefficient cap, step counts, and reference construction.

\begin{figure}[!ht]
    \centering
    \includegraphics[width=0.98\textwidth]{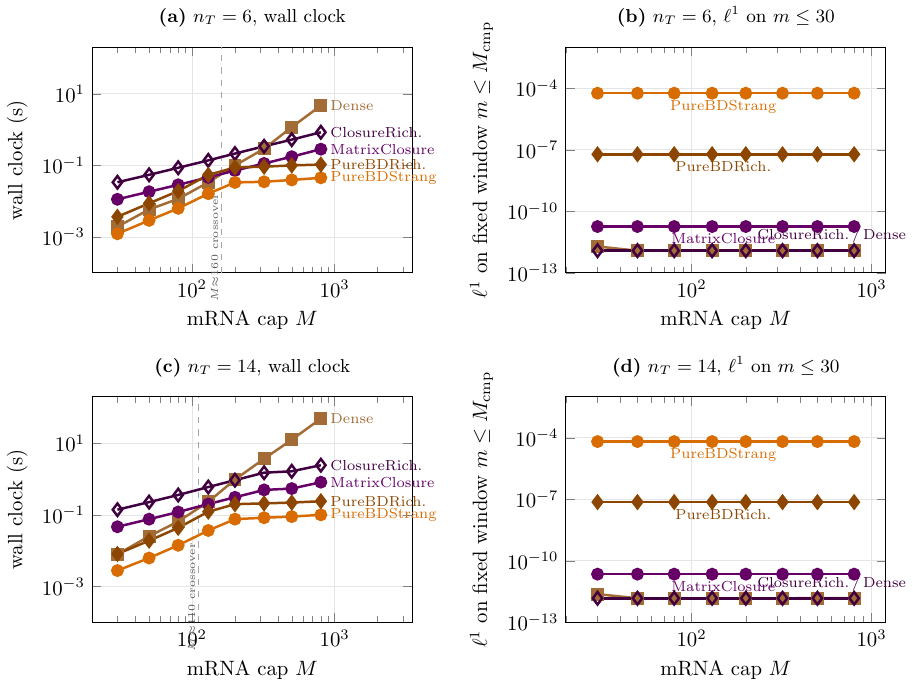}
    \caption{G/R elongation wall clock and count-index accuracy for $n_T=6$ (top) and $n_T=14$ (bottom).
        The left panels show wall clock versus the mRNA cap $M$; the right panels show the $\ell^1$ difference, summed over all internal states on $m\leq30$, from a 3200-step matrix-closure reference at cap $M_{\mathrm{ref}}=2000$.
        \emph{Dense} denotes dense Pad\'e evaluation of the capped matrix exponential; \emph{MatrixClosure} uses 1600 fourth-order Runge--Kutta steps, and \emph{ClosureRich.} extrapolates the 1600- and 3200-step closure solutions.
        \emph{PureBDStrang} uses 160 Strang steps with pure mRNA degradation as the linear-rate subproblem, and \emph{PureBDRich.} extrapolates the 160- and 320-step split solutions.
        Differences near $10^{-12}$ indicate the resolution of the numerical reference rather than a certified absolute error.}
    \label{fig:telegraph}
\end{figure}

Our matrix closure method is slower than dense Pad\'e at small caps but overcomes dense Pad\'e at $M\approx160$ for $n_T=6$ and $M\approx110$ for $n_T=14$.
At $M=800$, Richardson-refined matrix closure is $5.6\times$ and $20\times$ faster, respectively, with differences of $1.2\times10^{-12}$ and $1.5\times10^{-12}$ from the reference.
For $n_T=6$ at the same cap, pure-degradation Strang runs $104\times$ faster than dense Pad\'e with a difference of $5.8\times10^{-5}$, while Richardson reaches $6.1\times10^{-8}$ in $0.106$\,s.

\FloatBarrier
\subsection{Stationary solvers}\label{sec:exp_steady}

The rate decomposition also applies to solving for stationarity ($\mathcal{L}\mathbf{p}=0$).
The four panels cover a finite internal state, bilinear population coupling, several nonlinear reaction coordinates, and a fully linear-rate multitype model.

\begin{figure}[!ht]
    \centering
    \includegraphics[width=\textwidth]{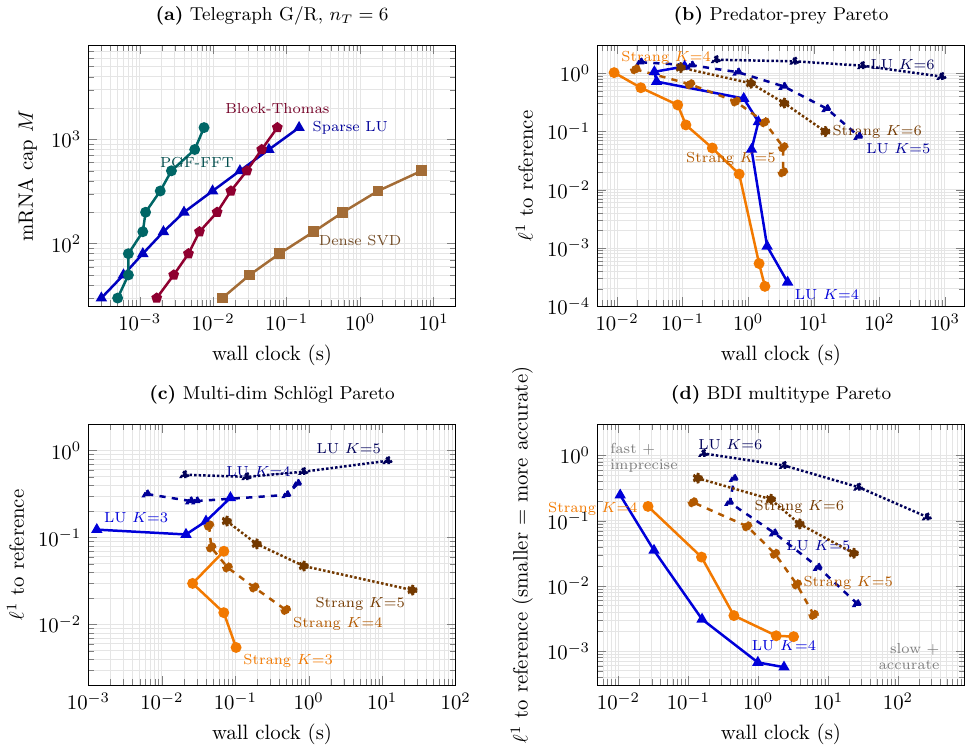}
    \caption{Stationary solver comparisons.
        Panel \textbf{(a)} compares telegraph G/R wall clock as the mRNA cap grows and does not imply that the methods have equal accuracy.
        Panels \textbf{(b)-(d)} plot wall clock against $\ell^1$ difference from the stated high-cap reference for closure-Strang power iteration and sparse LU applied to predator-prey, multidimensional Schl\"ogl, and multitype birth-death-immigration.}
    \label{fig:steady_state_combined}
\end{figure}

PGF-FFT has the lowest measured telegraph wall time from $M=80$ onward, but its $\ell^1$ difference from the high-cap reference is $7.7\times10^{-4}$ at $M=50$, $8.2\times10^{-2}$ at $M=130$, and $5.6\times10^{2}$ at $M=200$ because contour extraction amplifies roundoff.
Block-Thomas, by contrast, remains within $3\times10^{-16}$ of the reference for $M\geq50$ in this sweep.
For predator-prey at $K=4$ and target difference $5.5\times10^{-4}$, closure-Strang first reaches the target in $1.46$\,s, whereas sparse LU first reaches it in $3.99$\,s, so closure-Strang is $2.7\times$ faster.
At $K=5,N=8$, closure-Strang reaches a difference of $2.0\times10^{-2}$ in $3.4$\,s, while sparse LU reaches $8.2\times10^{-2}$ in $49$\,s.
The Schl\"ogl and birth-death-immigration panels also shift toward closure-Strang as $K$ grows, although their cap-error curves require careful accuracy matching.

\FloatBarrier
\section{Discussion}\label{sec:discussion}

For a fully linear-rate hierarchy, closure replaces $\Theta((N+1)^{3K})$ dense-exponential computational cost with $\Theta(SK(N+1)^{2K})$ computations per time step.
Closure also moves truncation from an evolving upper boundary to the initial data.
Dense Pad\'e remains faster for small scalar problems, but its time and storage grow more rapidly with the cap and number of count axes.

Splitting extends the method beyond the linear-rate subclass but reintroduces a cap for the remainder.
We retain boundary exactness only during the linear-rate substeps, and we preserve the sparse structure of $\mathcal{B}$ during the capped middle step.
Eq.~\ref{eq:strang_window_error} separates the second-order commutator error from the two projection tails; Richardson removes the leading commutator term but neither tail.

The favorable regime is therefore a large capped state space with a substantial linear-rate component and a sparse or absent remainder.
In this regime, closure avoids dense storage and artificial upper-boundary conditions during linear-rate evolution and can reduce wall time at matched accuracy.
Because the asymptotic bounds suppress implementation-dependent constants, they describe scaling but do not predict the finite-size crossover, which also depends on the model, tolerance, cap, and hardware.
For example, closure-Strang is $3.3\times$ faster than dense Pad\'e and $6.1\times$ faster than Krylov at $V=500,N=5000$, while only the factored method fits the eight-species problem beyond $N=2$ on our workstation.
State-specific matrix drift and higher-order splitting remain outside our assumptions~\cite{cuchieroAffineProcessesPositive2011,castellaSplittingMethodsComplex2009}.

\section*{Acknowledgments}

This research was supported in part by the Intramural Research Program of the National Institutes of Health (NIH); the contributions of the NIH author are considered Works of the United States Government, and the findings and conclusions are those of the author and do not necessarily reflect the views of the NIH or the U.S.\ Department of Health and Human Services.
The author thanks Carson Chow and Tom Chou for helpful comments.

\section*{Code and data availability}
The Supplemental Materials contain the Julia source code used to generate the results and the pinned package environment recorded in \texttt{Manifest.toml}.
The related \texttt{StochasticGene.jl} repository for fitting and analyzing stochastic gene-transcription models is available at \url{https://github.com/nih-niddk-mbs/StochasticGene.jl}.

\bibliographystyle{siam}
\bibliography{branching}

\end{document}